\let\csname equation*\endcsname\relax
\let\csname endequation*\endcsname\relax 
\theoremstyle{plain}
\newtheorem{theorem}{Theorem}[section]
\newtheorem{cor}[theorem]{Corollary}
\newtheorem{as}{Assumption}
\newtheorem{definition}[theorem]{Definition}
\theoremstyle{remark}
\newtheorem*{remark}{Remark}
\newcommand{\prior} {\pi_{\mbox{\tiny prior}}}
\newcommand{\post}{\pi_{\mbox{\tiny post}}}
\renewcommand{\vec}[1]{{\mathchoice
                     {\mbox{\boldmath$\displaystyle{#1}$}}
                     {\mbox{\boldmath$\textstyle{#1}$}}
                     {\mbox{\boldmath$\scriptstyle{#1}$}}
                     {\mbox{\boldmath$\scriptscriptstyle{#1}$}}}}
\renewcommand{\epsilon}{\varepsilon}
\renewcommand{\phi}{\varphi}
\newcommand{\map}{\textbf{MAP}}
\DeclareMathOperator{\jac}{\mathbf{J}}
\DeclareMathOperator{\argmin}{argmin}
\DeclareMathOperator{\argmax}{argmax}
\newcommand{\bbr}{\boldsymbol{r}}
\newcommand{\norm}[1]{\left\lVert#1\right\rVert}
\newcommand{\bra}[1]{\langle #1 \rangle}
\newcommand{\curBK}[1]{ {\left\{ #1 \right\}} }
\newcommand{\bR}{\mathbb{R}}
\newcommand{\calL}{\mathcal{L}}
\newcommand{\calD}{\mathcal{D}}
\newcommand{\vG}{\vec{G}}
\newcommand{\vN}{\vec{N}}
\newcommand{\vx}{\vec{x}}
\newcommand{\vy}{\vec{y}}
\newcommand{\vz}{\vec{z}}
\newcommand{\normal}{\mathcal{N}}
\DeclareAcronym{PAT}{short=PAT, long=photoacoustic tomography}
\DeclareAcronym{NFR}{short=NFR, long=Normalizing Flow Regularization}
\DeclareAcronym{TV}{short=TV, long=Total Variation}
\DeclareAcronym{CT}{short=CT, long=Computer Tomography}
\DeclareAcronym{CNN}{short=CNN, long=Convolutional Neural Network}
\DeclareAcronym{MRI}{short=MRI, long=Magnetic Resonance Imaging}
\DeclareAcronym{PSNR}{short=PSNR, long=Peak Signal-to-Noise Ratio}
\DeclareAcronym{SSIM}{short=SSIM, long=Structural Similarity Index}
\DeclareAcronym{RRA}{short=RRA, long=relative reconstruction accuracy}
\DeclareAcronym{GAN}{short=GAN, long=Generative Adversarial Network}
\DeclareAcronym{NF}{short=NF, long=Normalizing Flow}
\DeclareAcronym{MAP}{short=MAP, long=maximum a posteriori}
\DeclareAcronym{MLE}{short=MLE, long=maximum likelihood estimation}
\DeclareAcronym{UBP}{short=UBP, long=universal back-projection}
\newcommand{\Unet}{\texttt{U-Net}\xspace}
\newcommand{\Glow}{\texttt{Glow}\xspace}
\begin{document}

\title[NF regularization for PAT]{{Normalizing flow regularization for photoacoustic tomography}}
\author{Chao Wang$^{\dagger}$, Alexandre H. Thiery$^{\dagger}$}
\address{$^{\dagger}$ Department of Statistics and Data Science, National University of Singapore}
\ead{chaowyww@gmail.com, a.h.thiery@nus.edu.sg}

\vspace{10pt}
\begin{indented}
\item[]Aug 2022
\end{indented}

\begin{abstract}
\label{abs}
Proper regularization is crucial in inverse problems to achieve high-quality reconstruction, even with an ill-conditioned measurement system. This is particularly true for three-dimensional photoacoustic tomography, which is computationally demanding and requires rapid scanning, often leading to incomplete measurements. Deep neural networks, known for their efficiency in handling big data, are anticipated to be adept at extracting underlying information from images sharing certain characteristics, such as specific types of natural or medical images.
We introduce a \ac{NFR} method designed to reconstruct images from incomplete and noisy measurements. The method involves training a normalizing flow network to understand the statistical distribution of sample images by mapping them to Gaussian distributions. This well-trained network then acts as a regularization tool within a Bayesian inversion framework. Additionally, we explore the concept of adaptive regularization selection, providing theoretical proof of its admissibility.
A significant challenge in three-dimensional image training is the extensive memory and computation requirements. We address this by training the normalizing flow model using only small-size images and applying a patch-based model for reconstructing larger images. Our approach is model-independent, allowing the reuse of a well-trained network as regularization for various imaging systems. Moreover, as a data-driven prior, \ac{NFR} effectively leverages the available dataset information, outperforming artificial priors. This advantage is demonstrated through numerical simulations of three-dimensional photoacoustic tomography under various conditions of sparsity, noise levels, and limited-view scenarios.
\end{abstract}

\section{Introduction}
\label{sec:1}

Photoacoustic tomography as a nonionizing imaging modality is promising for clinical applications~\cite{Nie2014,Valluru2016,Beard2011,Wang2009a,Xia2014}. 
A short laser pulse illuminates a biological tissue, creating a small amount of heating throughout the tissue. This process leads to a thermoelastic expansion that generates a pressure wave that propagates through the object. Crucially, the initial pressure field created by the laser depends on the local properties of the biological tissue. Consequently, reconstructing the initial pressure field can be useful in obtaining diagnostic information of clinical interest.
The reconstruction is obtained through a set of transducers surrounding the biological tissue that can measure the time-dependent pressure field. The inverse problem that consists of reconstructing the initial pressure field from these measurements is the topic of this text.
For fast data acquisition, which is crucial in clinical settings, only incomplete and noisy measurements can be collected. Regularization methods are consequently of paramount importance for addressing the associated ill-conditioned inverse problem. 
To exploit the statistical properties that scan of biological data are expected to satisfy (e.g. smooth background, sharp edges, the existence of low-rank or sparse structures), standard regularization techniques such as the use of $\ell_p$ norms, \ac{TV} regularization, or nuclear norms have been widely adopted to improve the quality of reconstruction. However, these general and analytically tractable regularization methods are unlikely to efficiently exploit the structures that characterize the scans in specific subfields (e.g., brain fMRI scans have different statistical properties than chest X-ray scans). Using recent advances in generative modeling parametrized with deep neural networks, this article proposes a data-driven approach to construct appropriate regularization methods leveraging large data sets of complete measurements.

Deep learning approaches are already playing a crucial role in image reconstruction~\cite{Chen2017,Wang2016a,Ongie2020,Aggarwal2019,McCann2017}. This includes post-processing-based methods, model-motivated models, and deep learning-based regularization. Post-processing-based methods are models that can transform a rough reconstruction into a more accurate one: once the post-processing model is fitted to data, which can be a computationally intensive procedure, deploying the model is generally computationally cheap and straightforward to implement. However, this class of methods generally relies on the existence of a large {\it training} dataset of pairs of input data and the corresponding accurate reconstruction. In medical settings, constructing such datasets can be difficult due to the associated costs and clinical constraints. Furthermore, post-processing methods can generally only be deployed for data obtained with the same imaging system as the one used for creating the training dataset. 
Model-motivated methods are a class of algorithms that learn (part of) an iterative optimization process for reconstruction purposes: this can lead to important computational savings while maintaining the reconstruction accuracy~\cite{Chang2017,Dittmer2020,Baguer2020,Banert2021,Hammernik2018,Adler2018,Adler2017b,Zhang2020}.
Finally, deep learning-based regularization approaches consist of using training data to build appropriate regularization terms~\cite{Aggarwal2019, Romano2017, duff2021}; the methods proposed in this text belong to this class of methods.

In photoacoustic tomography, deep learning methods already produce remarkable results~\cite{Hauptmann2020a,Grohl2021,Kim2020,Schwab2018}. For example, Convolutional Neural Networks \ac{CNN} have been used for designing post-processing methods to improve the accuracy of rough reconstructions obtained with standard methods such as \ac{UBP} \cite{PhysRevEXu} or adjoint-based approaches~\cite{Guan2021,Guan2020a,Antholzer2019a,Mohammad2018,Grohl2018,Guan2020,Kim2020,Lan2020}.
For model-motivated methods, due to the computationally demanding nature of estimating the forward and backward processes, each iteration is typically trained separately~\cite{Hauptmann2018}.
In \cite{Li2020,Haltmeier2019,Romano2017}, deep neural networks are utilized as regularizers to remove artifacts, and a complete convergence analysis of the proposed algorithms is also established.
These recently developed methodologies demonstrate the potential of data-driven approaches in the image reconstruction of photoacoustic tomography \ac{PAT}. It is important to note that, unlike in \ac{CT} or \ac{MRI} where large datasets are widely available, obtaining large datasets of paired images for emerging imaging technologies like photoacoustic tomography remains challenging. Therefore, for photoacoustic tomography, it is crucial to design methods that (1) do not rely on datasets of paired images and (2) are independent of the measurement system and noise statistics.

In this article, we propose the use of a normalizing flow~\cite{Kingma2018} as a regularizer to address the ill-posedness in image reconstruction with limited-view measurements. A trained normalizing flow is designed to map a distribution of images (i.e., training data) to a tractable \textit{base distribution}, typically chosen as an isotropic Gaussian. This procedure offers a data-driven approach to building a prior distribution, which can subsequently be used within a Bayesian inversion framework~\cite{Papamakarios2021,Siahkoohi2021}. Compared to more standard algorithms that typically rely on a small number of hand-crafted features and generic regularization terms, this data-driven approach more effectively leverages the statistical properties of the distribution of images being reconstructed. Additionally, the method is agnostic to the imaging system and noise statistics; only a dataset of \textit{ground truth} images is necessary. Once trained, the model can be applied within different measurement settings.

In this paper, we describe the \ac{NFR} method, a Bayesian inversion algorithm for three-dimensional photoacoustic tomography. Here, a normalizing flow is used to construct a prior distribution of images using a training dataset of clear images. Our contributions are as follows:

\begin{itemize}
\item We show that normalizing flow successfully learns the distribution of the training data and performs effectively as a regularizer within iterative methods. \ac{NFR} method achieves competitive reconstruction compared to state-of-the-art methods such as \ac{TV} regularization iterative methods and \ac{CNN} post-processing method. 
\item We investigate the tuning of the regularization parameter. We provide a rule for adaptively selecting the regularization parameter and prove its admissibility.
\item To mitigate the computational challenges associated with processing large three-dimensional images, we develop a \textit{patch-based} approach~\cite{Knyaz2019} that involves randomly selecting several patches across the three-dimensional image.
\end{itemize}

The rest of this paper is organized as follows. In Section \ref{sec:PAT}, we describe the Bayesian framework for solving the inverse problem of \ac{PAT}. Some background material on \ac{PAT} is contained in Section \ref{sec:PAT.IP}, and normalizing flow models are introduced in Section \ref{sec:NF}. The well-posedness of \ac{NFR} is analyzed in Section \ref{sec:glow} and \ref{sec:3.2} using \Glow as an example. A strategy to select the regularization parameter is proposed and studied in Section \ref{sec:3.3}. The corresponding algorithm is described in Section \ref{sec:4}: the adaptive algorithm to select the regularization parameter is described and analyzed in Section \ref{sec:4.1}, and the patch-based approach is discussed in Section \ref{sec:4.2}. Section \ref{sec:5} compares in different sparsity and noise levels the \ac{NFR} with standard \ac{TV} methods and a \ac{CNN} post-processing methods. Section \ref{sec:6} concludes the article.

\section{Bayesian inverse problems in \ac{PAT}}
\label{sec:PAT}

\subsection{Inverse problem in \ac{PAT}}
\label{sec:PAT.IP}

As explained in the introduction, in three-dimensional \ac{PAT}, the illumination of a biological tissue $\Omega \subset \mathbb{R}^3$ with a laser creates a spatially inhomogeneous pressure field that depends on the local biological properties of the tissue. The initial pressure field $p_0(\bbr) \equiv p(\bbr, t=0)$, for spatial coordinate $\bbr \in \bR^3$, then propagates through the tissues before being measured by a set of $M \geq 1$  transducers surrounding the tissue. In other words, the transducers measure the time-dependent pressure field $p(\bbr, t)$ for $t \in [0,T]$ at a finite set of spatial locations $\{\bbr_i\}_{i=1}^M$ located on the boundary of $\Omega$.
Assuming that the biological tissue is acoustically homogeneous (i.e., the acoustic velocity $c_0$ is constant throughout the medium) and acoustically linear and non-attenuating on the spatial and time-relevant scales, as is approximately the case in most clinically relevant situations, the evolution of the pressure field $p(\bbr, t)$ is described by a constant coefficient wave equation,
\begin{equation}
  \label{eq:wave}
  \left\{
  \begin{aligned}
    &\partial^2_{tt} \, p(\bbr, t) = c_0^2 \, \Delta p(\bbr,t)&& \qquad (\bbr,t)\in \mathbb{R}\times (0,T],\\
    &p(\bbr, 0) = p_0(\bbr), && \qquad \bbr\in \Omega,\\
    &p(\bbr,0) = 0, && \qquad\bbr\in \mathbb{R}^3/\Omega,\\
    &\frac{\partial p}{\partial t}(r,0) = 0, && \qquad \bbr \in \Omega.
  \end{aligned}
  \right.
\end{equation}
This initial value problem for the wave equation is well studied, and the pressure field at time $t \in [0, T]$ is given by Kirchhoff's formula. We have that
\begin{equation}
 p(\bbr, t) =  \partial_t \, \Big\{ c_0 \, t \times ( \mathcal{A} p_0)(\bbr, c_0 \, t)  \Big\},
\end{equation}
where, for any $\bbr \in \bR^3$ and radius $R > 0$, the quantity $( \mathcal{A} p_0)(\bbr, R)$ is the average of the initial pressure field $p_0$ on the sphere centered at $\bbr$ with radius $R$,
\begin{equation}
 ( \mathcal{A} p_0)(\bbr, R) \equiv  \int_{\vec{v} \in \mathbb{S}_2} p_0(\bbr + R \, \vec{v}) \, \sigma(d \vec{v}),
\end{equation}
with $\sigma(d \vec{v})$ the uniform probability distribution on the unit sphere $\mathbb{S}_2 \subset \bR^3$. The averaging operator $\mathcal{A}$is related to the Radon-transform that has been extensively studied in computational tomography. After discretization, this linear inverse problem can be formulated as
\begin{equation}
\label{eq:IP}
\vy = \vec{F} \vx + \vec{\eta}
\end{equation}
for a vector $\vx \in \bR^{D_{\vx}}$ representing the discretization of the unknown initial acoustic pressure field $p_0(\bbr)$ and a vector $\vy \in \bR^{D_{\vy}}$ that represents the collected data contaminated by additive noise $\vec{\eta} \in \bR^{D_{\vy}}$. In the remainder of this text, we assume that the additive error term $\vec{\eta}$ follows a centered Gaussian distribution with covariance matrix $\Gamma_{\eta}$ although the our proposed methodology readily generalizes to other noise distributions. Note that the matrix $\vec{F}$ that represents the discretization of the forward operator \eqref{eq:wave} is typically extremely ill-conditioned and is never actually computed or stored in memory.

Combining a prior distribution with density $\prior(\vx)$ (with respect to the Lebesgue measure in $\bR^{D_{\vx}}$) that encodes the probabilistic assumptions on $\vx \in \bR^{D_{\vx}}$ with the probabilistic model described in Equation \eqref{eq:IP} leads to the posterior distribution $\post(d\vx)$ whose density (with respect to the Lebesgue measure in $\bR^{D_{\vx}}$) reads
\begin{align} \label{eq.post}
\post(\vx) = \tfrac{1}{Z} \, \prior(\vx) \, \exp \curBK{-\frac{1}{2} \|\vy - \vec{F} \vx\|^2_{\Gamma^{-1}_{\eta}}}
\end{align}
for a normalization constant $Z = \int \, \prior(\vx) \, \exp \curBK{-\frac{1}{2} \|\vy - \vec{F} \vx\|^2_{\Gamma^{-1}_{\eta}}} \, d \vx$. We have used the standard weighted norm notation $\|\vx\|^2_{\Gamma^{-1}_{\eta}} \equiv \bra{\vx, \Gamma^{-1}_{\eta}\vx}$. In this text, we are primarily interested in leveraging data to design an appropriate prior distribution $\prior$ that encodes as faithfully as possible the statistical properties of the discretized initial pressure field $\vx \in \bR^{D_{\vx}}$ and compute the \ac{MAP} estimate $\vx_{\map} = \argmax \post(\vx)$. Indeed, the \ac{MAP} estimate $\vx_{\map} \in \bR^{D_{\vx}}$ can also be expressed as the minimizer of the objective function,
\begin{align}
\vx \; \mapsto \; \frac{1}{2} \, \|\vy - \vec{F} \vx\|^2_{\Gamma^{-1}_{\eta}}  + \vec{R}(\vx)
\end{align}
where the negative log-prior density $\vec{R}(\vx) \equiv -\log \prior(\vx)$ takes the role of a regularization term. The negative log-likelihood $\frac{1}{2} \, \|\vy - \vec{F} \vx\|^2_{\Gamma^{-1}_{\eta}}$ is a data-misfit term.
Furthermore, we only consider the case where the covariance of the additive noise $\eta$ is isotropic with variance $\lambda > 0$, i.e. we have $\Gamma_{\eta} \equiv \lambda \, \mathbf{I}_{D_{\vy}}$, although more complex covariance structures would be straightforward to accommodate with the methods described in this text. The \ac{MAP} estimate $\vx_{\map}$ can consequently be expressed as the minimizer of the objective
\begin{align}
\label{eq.MAP}
\calL(\vx; \lambda) \; \equiv \; \frac{1}{2} \, \|\vy - \vec{F} \vx\|^2  + \lambda \, \vec{R}(\vx) .
\end{align}
The automated tuning of the parameter $\lambda > 0$ is important in practice and adaptive strategies are presented and theoretically analyzed in Sections \ref{sec:3.3} and \ref{sec:4.1}.

\subsection{Normalizing flow-based image prior}
\label{sec:NF}

In order to design a sensible prior distribution that translates the statistical assumptions on the discretization $\vx \in \bR^{D_{\vx}}$ of the initial pressure field $p_0$, a natural approach consists in fitting a statistical model to a large dataset of ground truth samples $\calD \equiv \{ \vx_i \}_{i=1}^{|\calD|}$. In this text, we illustrate the proposed method on \ac{CT} data. For a family of probability densities $\pi_{\theta}$ on $\bR^{D_{\vx}}$ parametrized by a parameter $\theta \in \Theta \subset \bR^{D_{\Theta}}$, this program can be carried out by \ac{MLE}, i.e. minimization of the negative log-likelihood function
\begin{align*}
\theta \mapsto  \sum_{\vx_i \in \calD} -\log \pi_{\theta}(\vx_i).
\end{align*}
Indeed, density estimation in high-dimensional scenarios faces significant challenges. Moreover, designing flexible parametrized densities in $\mathbb{R}^{D_{\vx}}$ to accurately model the statistics of biological tissue scans is not straightforward. Here, we leverage \ac{NF} parametrized with 3D \ac{CNN} since this type of neural architectures have the correct inductive biases for efficiently modeling spatially extended samples with local correlation structures. A \ac{NF} model on $\bR^{D_{\vx}}$ defines a family of densities $\pi_{\theta}(\vx)$ by leveraging a family of diffeomorphisms, i.e. continuously differentiable bijective mappings, and a base probability density $\pi_{B}(\vz)$ on $\bR^{D_{\vx}}$. More precisely, for a (generative) diffeomorphism $\vG_{\theta}: \bR^{D_{\vx}} \to \bR^{D_{\vx}}$ and its inverse $\vN_{\theta}: \bR^{D_{\vx}} \to \bR^{D_{\vx}}$, i.e.
\begin{align*}
\vG_{\theta} \circ \vN_{\theta}(\vx) = \vx \qquad \text{for all} \quad (\vx, \theta) \in \bR^{D_{\vx}} \times \Theta,
\end{align*}
the distribution $\pi_{\theta}$ is defined as the push-forward of the base distribution $\pi_{B}$ through the generative mapping $\vG_{\theta}$. For any (bounded) measurable function $\phi: \bR^{D_{\vx}} \to \bR$ we have that
\begin{align*}
\int \phi(\vx) \, \pi_{\theta}(\vx) \, d\vx = \int \phi [ \vG_{\theta}(\vz) ]\, \pi_{B}(\vz) \, d\vz.
\end{align*}
A sample $\vx \sim \pi_\theta$ can be constructed by first sampling from the base distribution $\vz \sim \pi_B$ and then set
\begin{align*}
\vx \; = \; \vG_{\theta}(\vz).
\end{align*}
The mapping $\vN_{\theta}$ is {\it normalizing} in the sense that it maps the distribution $\pi_\theta$ to the base distribution $\pi_{B}$. The density $\pi_{\theta}$ reads
\begin{align}
\label{eq.pi.theta}
\pi_{\theta}(\vx) \; = \; \pi_B[\vN_{\theta}(\vx)] \times \left| \det \jac_{\vN_{\theta}}(\vx) \right|
\end{align}
where $\jac_{\vN_{\theta}}(\vx)$ denotes the Jacobian matrix of $\vN_{\theta}$ at $\vx$. In practice, the normalizing mapping $\vN_\theta$ is obtained by composing $K \geq 1$ simpler diffeomorphism mappings,
\begin{align*}
\vN_{\theta} \; \equiv \; \vN_{\theta, K} \circ \cdots \circ  \vN_{\theta, 2} \circ \vN_{\theta, 1}.
\end{align*}
In this text and as is standard in applications, the base distribution $\pi_{B}$ is chosen as the standard centered Gaussian distribution $\normal( \mathbf{0}, \mathbf{I})$ in $\bR^{D_{\vx}}$ so that $-\log \pi_{B}(\vz) = \frac12 \, \| \vz \|^2 + \textrm{(constant)}$. Importantly, the mappings $\{ \vN_{\theta, k}  \}_{k=1}^{K}$ are constructed so that the Jacobian determinants $\det(\jac_{\vN_{\theta,k}})$ are straightforward to compute. This implies that
\begin{align}
\label{eq.chain.rule}
-\log \pi_\theta(\vx) = \frac12 \| \vN_{\theta}(\vx) \|^2 - \sum_{k=1}^K \log \left| \det[\jac_{\vN_{\theta,k}}(\vx_k)] \right| + \textrm{(constant)}
\end{align}
with $\vx_1 = \vx$ and $\vx_{k+1} = \vN_{k}(\vx_k)$ for $1 \leq k \leq K-1$. Section \ref{sec:glow} describes more precisely the neural parametrization of the transformations $\vN_{\theta, k}$. The parameter $\theta \in \Theta$ that describes the neural weights parametrizing the normalizing function $\vN$ is obtained by maximum likelihood estimation,
\begin{align}
\label{eq.fitting.flow}
\theta_\star \; = \; \argmin_{\theta \in \Theta} \; \curBK{ \; \theta \mapsto \sum_{\vx_i \in \mathcal{D}}   \Big( \frac{1}{2}\|   \vN_{\theta}(\vx_i) \|^2 -  \log \left| \det[\jac_{\vN_{\theta}}(\vx_{i})] \right| \Big) \;}.
\end{align}
Once the neural parameter $\theta_\star \in \Theta$ is obtained, this defines a data-driven regularization functional,
\begin{align*}
\vec{R}(\vx) \equiv   \frac{1}{2} \| \vN_{\theta_\star}(\vx)\|^2 -  \log \left| \det[\jac_{\vN_{\theta_\star}}(\vx)] \right|
=
-\log \pi_{\theta_\star}(\vx) + \textrm{(constant)}
\end{align*}
that approximates, up to an irrelevant additive constant, the negative log-density of the ground truth samples $\calD \equiv \{ \vx_i \}_{i=1}^{|\calD|}$.
The \ac{MAP} estimate $\vx_{\map}$ of the discretized initial pressure field $\vx \in \bR^{D_{\vx}}$ can consequently be obtained by minimizing the loss
\begin{align}
\label{eq:objective}
\calL(\vx; \lambda) \; \equiv \; \frac{1}{2} \, \|\vy - \vec{F} \vx\|^2  + \lambda \,  \Big(  \frac{1}{2} \| \vN_{\theta_\star}(\vx) \|^2 -  \log \left| \det[\jac_{\vN_{\theta_\star}}(\vx)] \right| \Big).
\end{align}
This minimization can be carried out with standard adaptive stochastic-gradient-descent methods.

\section{Analysis of normalizing flow-based regularization (NFR)}
\label{sec:3}

In this section, the well-posedness (existence, stability, and convergence) of \ac{NFR} is discussed. For concreteness, the \Glow neural architecture \cite{Kingma2018} is used in Section \ref{sec:glow} and \ref{sec:3.2}, although our proposed method is agnostic to the choice of neural architecture. Importantly, an adaptive tuning strategy to select the regularization parameter is presented and analyzed in Section \ref{sec:3.3}.

\subsection{\Glow model}
\label{sec:glow}

Given dataset $\calD \equiv \{ \vx_i \}_{i=1}^{|\calD|}$, the normalizing flow \Glow model can be used to for density estimation; the parameters $\theta_\star$ of the \Glow model are obtained through Maximum Likelihood Estimation as described in Equation \eqref{eq.fitting.flow}.
Empirical evidences show that \Glow can lead to significant improvements when compared to other standard flow models such as \texttt{NICE} \cite{Hatt2009} or \texttt{RealNVP} \cite{Dinh2017}. A \Glow model $\vN_\theta$ with parameter $\theta$ consists of multiple steps of flow $f_{\theta,k}\ (k=1, 2, \dots, K)$ that are connected by squeeze layers mappings $q_{\theta,k}\ (k=1, 2, \dots, K)$ and split layers $h_{\theta,k}\ (k=1, 2, \dots, K)$. The model can be summarized as follows,
\begin{equation}
    \label{eq:NF-consist-of}
    \begin{aligned}
    & \vN_{\theta} \; \equiv \; \vN_{\theta, K} \circ \cdots \circ  \vN_{\theta, 2} \circ \vN_{\theta, 1},\\
    &\vN_{\theta,k} \equiv h_{\theta,k} \circ f_{\theta,k} \circ q_{\theta,k}\  (k = 1, 2, \dots, K),\\
    \end{aligned}
\end{equation}
where $h_{\theta, K}=\vec{I}$.
In \eqref{eq:NF-consist-of}, $f^1_{\theta,k}$ is an activation normalization layer, $f^2_{\theta,k}$ is an invertible $1\times 1$ convolution and $f^3_{\theta,k}$ is an affine coupling layer. We have
\begin{subequations}
    \label{eq:step-flow}
\begin{empheq}{align}
    & f_{\theta,k} = f^3_{\theta,k} \odot f^2_{\theta,k} \odot f^1_{\theta,k}, \label{eq:step-flow-a}\\
    & \vx^{k,1} = f^1_{\theta,k}(\vx^{k,0}) = \vec{s}_{\theta,k}\odot \vx^{k,0} + \vec{b}_{\theta,k},\label{eq:step-flowa-b}\\
    & \vx^{k,2} = f^2_{\theta,k}(\vx^{k,1}) = \bm{W}_{\theta,k}\vx^{k,1},\label{eq:step-flow-c}\\
    & \vx^{k,3} = f^3_{\theta,k}(\vx^{k,2}) = 
    \begin{pmatrix}
      g^1_{\theta,k}\odot \vx_a^{k,2} + g^2_{\theta,k}\\
      \vx^{k,2}_b  
    \end{pmatrix},\label{eq:step-flow-d}
\end{empheq}
\end{subequations}
where the inputs of $f_{\theta,k}^1$, $f_{\theta,k}^2$ and $f_{\theta,k}^3$ are denoted by $\vx^{k,0}$, $\vx^{k,1}$ and $\vx^{k,2}$ with $\vx^{k,0}=q_{\theta,k}(\vx_k)$ and $h_{\theta,k}(\vx^{k,3})=\vx_{k+1}$. Note that $\bm{W}_{\theta,k}$ in \eqref{eq:step-flow-c} is an invertible matrix product. Affine coupling layer \eqref{eq:step-flow-d} acts on the two halves $\vx^{k,2}_a$ and $\vx^{k,2}_b$ that are splits along the channel of $\vx^{k,2}$. The quantities $g^1_{\theta,k}$ and $g^2_{\theta,k}$ are defined as
\begin{align*}
[\log g^1_{\theta,k}, g^2_{\theta,k}] \; = \;  \texttt{NN}(\vx^{k,2}_b),
\end{align*}
where $\log g^2_{\theta,k}$ and $g^1_{\theta,k}$ are the two parts of the output of the network \texttt{NN}, separated by channels. The neural network \texttt{NN} consists of three convolutional layers and nonlinear activation functions \texttt{RELU}. The reader is referred to \cite{Kingma2018} for details.

\subsection{Well-posedness of the regularization}
\label{sec:3.2}
Once the parameters $\theta_\star$ of the \Glow model are obtained, these parameters are fixed. The reconstruction of the initial pressure field is then carried out by minimizing the following regularized objective,
\begin{equation}
\label{eq:obj-glow}
\left\{
\begin{aligned}
\calL(\vx; \lambda, \vy) &\; \equiv \; \frac{1}{2} \, \|\vy - \vec{F} \vx\|^2_2  +\lambda\, \vec{R}(\vx)\\
\vec{R}(\vx) &\; \equiv \; \frac{1}{2} \| \vN_{\theta_\star}(\vx)\|^2 -  \log \left| \det[\jac_{\vN_{\theta_\star}}(\vx)] \right|,
\end{aligned}
\right.
\end{equation}
where parameter $\lambda > 0$ quantifies the amount of regularization
To proceed with the analysis of this regularized objective, any solution $\vx^+ \in \bR^{D_x}$ that satisfies
\begin{equation}
\label{eq:deep-prior-solu}
\vec{R}(\vx^+) = \inf \curBK{ \vec{R}(\vx) \; : \; \vec{F}\vx=\vy }.
\end{equation}
is referred to as a {\it deep-prior solution}. In other words, among all the solutions to the equation $\vec{F}\vx=\vy$, deep-prior solutions are the ones that minimize the regularization functional $\vec{R}$.

\begin{theorem}
\label{the:1}
Consider the regularized objective \eqref{eq:obj-glow}. The following properties hold:
\begin{enumerate}
\item {\bf Existence:} for any $\vy\in \bR^{D_{\vy}}$ and $\lambda>0$, there exists at least one minimizer of the regularized objective $\vx \mapsto \calL(\vx; \lambda, \vy)$;
\item {\bf Stability:} for any $\lambda > 0$ and sequence $\{(\vx_k, \vy_k)\}_{k \geq 0}$ such that $\vy_k \rightarrow \vy \in \bR^D_y$ and $\vx_k \in \argmin \calL(\vx; \lambda, \vy_k)$, there exists a subsequence of $\{ \vx_k \}_{k \geq 0}$ that converges to a minimizer of the regularized objective $\vx \mapsto \calL(\vx; \lambda, \vy)$.
\item {\bf Convergence:} for $\vx^*\in X$ and $\vy^*=\vec{F}\vx^*$, let a sequence $\vy_k$ satisfies $\norm{\vy^*-\vy_k}_2\leq \delta_k\rightarrow 0(k\rightarrow \infty)$, and regularization parameters satisfy $\lambda_k\rightarrow 0$ and $\delta_k^2/\lambda_k\rightarrow 0$, then the sequence of the minimizers of $\calL(\vx; \lambda_k, \vy_k)$ will at least has a subsequence that converges to a minimizer of $\calL(\vx;\lambda=0,\vy)$. The limits of convergent subsequences are deep-prior solutions of $\vec{F}\vx =\vy$ if the solutions of $\vec{F}\vx=\vy$ exist.
\end{enumerate}
\end{theorem}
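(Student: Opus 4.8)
The plan is to treat Theorem \ref{the:1} as an instance of the classical well-posedness theory for variational (Tikhonov-type) regularization, specialized to the finite-dimensional setting $\bR^{D_{\vx}}$ and to the concrete regularizer $\vec{R}$ built from the \Glow network. The whole argument rests on two structural properties of $\vec{R}$ that I would establish first as a lemma: (i) $\vec{R}$ is continuous on $\bR^{D_{\vx}}$, and (ii) $\vec{R}$ is coercive, i.e.\ $\vec{R}(\vx) \to +\infty$ as $\norm{\vx}_2 \to \infty$ (equivalently, its sublevel sets are bounded). Continuity is immediate because $\vN_{\theta_\star}$ is a $C^1$ diffeomorphism with nowhere-vanishing Jacobian, so both $\tfrac12\norm{\vN_{\theta_\star}(\vx)}^2$ and $\log\left|\det \jac_{\vN_{\theta_\star}}(\vx)\right|$ depend continuously on $\vx$. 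Since $\vec{F}$ is linear and the data-misfit term is continuous and nonnegative, continuity of $\calL(\cdot;\lambda,\vy)$ and joint continuity in $(\vx,\vy)$ follow at once.

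Given continuity and coercivity, the three parts are standard. For existence I would use the direct method: fix $\vy,\lambda$, take a minimizing sequence; coercivity forces it into a bounded set, Bolzano--Weierstrass extracts a convergent subsequence, and continuity of $\calL$ identifies the limit as a minimizer. For stability, fix $\lambda$ and let $\vy_k\to\vy$ with $\vx_k\in\argmin\calL(\cdot;\lambda,\vy_k)$. Minimality against a fixed reference point together with boundedness of $\{\vy_k\}$ bounds $\calL(\vx_k;\lambda,\vy_k)$ from above; since the data term is $\geq 0$ this bounds $\vec{R}(\vx_k)$, so coercivity makes $\{\vx_k\}$ bounded. Extracting $\vx_{k_j}\to\vx_\infty$ and using joint continuity together with the per-index minimality inequality $\calL(\vx_{k_j};\lambda,\vy_{k_j}) \le \calL(\vx;\lambda,\vy_{k_j})$ for arbitrary $\vx$, passing to the limit yields $\calL(\vx_\infty;\lambda,\vy)\le\calL(\vx;\lambda,\vy)$, i.e.\ $\vx_\infty$ is a minimizer.

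For convergence I would run the usual two-step Tikhonov argument. Writing $\vy^*=\vec{F}\vx^*$ and $\vx_k\in\argmin\calL(\cdot;\lambda_k,\vy_k)$, comparison with $\vx^*$ gives $\tfrac12\norm{\vy_k-\vec{F}\vx_k}^2+\lambda_k\vec{R}(\vx_k)\le \tfrac12\delta_k^2+\lambda_k\vec{R}(\vx^*)$. Dividing by $\lambda_k$ and using $\delta_k^2/\lambda_k\to0$ bounds $\vec{R}(\vx_k)$, so coercivity yields a convergent subsequence $\vx_{k_j}\to\vx_\infty$; the same inequality forces $\norm{\vy_k-\vec{F}\vx_k}^2\le\delta_k^2+2\lambda_k\vec{R}(\vx^*)\to0$, whence $\vec{F}\vx_\infty=\vy^*$ by continuity of $\vec{F}$ and $\vy_k\to\vy^*$. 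Thus $\vx_\infty$ attains the minimal (zero) data-misfit, which is realized since $\vx^*$ itself solves $\vec{F}\vx=\vy^*$, so $\vx_\infty$ minimizes $\calL(\cdot;0,\vy^*)$. To obtain the deep-prior property I would repeat the comparison with an arbitrary solution $\vz$ of $\vec{F}\vz=\vy^*$ in place of $\vx^*$, giving $\vec{R}(\vx_{k_j})\le \delta_{k_j}^2/(2\lambda_{k_j})+\vec{R}(\vz)$, and take $\liminf$ using continuity (lower semicontinuity suffices) of $\vec{R}$ to conclude $\vec{R}(\vx_\infty)\le\vec{R}(\vz)$; hence $\vx_\infty$ minimizes $\vec{R}$ over the solution set, i.e.\ is a deep-prior solution.

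The hard part is coercivity of $\vec{R}$, as the remaining steps are routine once it is in hand. Here I would exploit the \Glow structure \eqref{eq:NF-consist-of}--\eqref{eq:step-flow}: the activation-normalization and invertible $1\times1$-convolution layers have Jacobian determinants that are constant in $\vx$ (they are affine, resp.\ linear, maps), so the only $\vx$-dependent contribution to $\log\left|\det\jac_{\vN_{\theta_\star}}\right|$ comes from the affine coupling layers, where it equals a sum of the scale outputs $\log g^1_{\theta,k}$ produced by the network \texttt{NN}; being a composition of (Lipschitz) convolutions and \texttt{RELU} activations, this term grows at most linearly in $\norm{\vx}$. Meanwhile $\vN_{\theta_\star}$, as a homeomorphism of $\bR^{D_{\vx}}$, is proper, so $\norm{\vN_{\theta_\star}(\vx)}\to\infty$ as $\norm{\vx}\to\infty$; the delicate point I would need to pin down is a quantitative lower bound on the growth of $\tfrac12\norm{\vN_{\theta_\star}(\vx)}^2$ that dominates the at-most-linear log-determinant term, which I expect to follow by tracking the bi-Lipschitz behaviour of the affine and convolutional layers layer-by-layer. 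Establishing this growth estimate rigorously is the main technical obstacle.
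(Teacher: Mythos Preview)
Your approach is correct and coincides with the paper's: both reduce Theorem~\ref{the:1} to the standard well-posedness theory for Tikhonov-type regularization by verifying that $\vec{R}$ is continuous and coercive on the finite-dimensional space $\bR^{D_{\vx}}$. The paper's proof is much terser than yours---it simply asserts continuity and $\lim_{\norm{\vx}\to\infty}\vec{R}(\vx)=\infty$ without justification and then cites \cite{scherzer2009variational} (Chapter~5) and \cite{Li2020} (Theorem~2.6) for the existence/stability/convergence conclusions---whereas you spell out the direct-method and Tikhonov comparison arguments explicitly and, going beyond the paper, attempt to derive coercivity from the layer-by-layer \Glow structure; the paper leaves that point unproved.
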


\begin{proof}
The forward operator $\vec{F}$ is a finite-dimensional linear operator from $\bR^{D_\vx}$ to $\bR^{D_\vy}$. Furthermore, the regularization functional $\vec{R}(\vx) = \frac{1}{2} \| \vN_{\theta_\star}(\vx)\|^2 -  \log \left| \det[\jac_{\vN_{\theta_\star}}(\vx)] \right|$ is continuous with $\lim_{\|\vx\| \to \infty} \vec{R}(x) = \infty$.
Consequently, the existence, stability and convergence properties of the regularized functional \eqref{eq:obj-glow} follow from \cite{scherzer2009variational} (Chapter 5) and \cite{Li2020}(Theorem 2.6).

\end{proof}

\begin{remark}
A similar study of regularization with deep neural artchitecture can be found in~\cite{Li2020}. Weak convergence is discussed for sequentially lower semi-continuous regularizers $\vec{R}$ defined on infinite-dimensional space and conditions for strong convergence to hold are discussed.
\end{remark}

\subsection{Parameter selection strategy}
\label{sec:3.3}

This section describes an adaptation strategy for setting the regularization parameter $\lambda > 0$.
Its admissibility is proved in Theorem \ref{the:2} while its existence is proved in Theorem \ref{the:exist-rx-lambda}.
For any sample $\vx^*\in \bR^{D_\vx}$, it is assumed that the noise is $\vec{\vec{\eta}}\in \bR^{D_\vy}$, and the observation is $\vy_{\vec{\vec{\eta}}} = \vec{F} \vx^* + \vec{\vec{\eta}}$. According to Theorem \ref{the:1}, a minimizer of $\calL(\vx; \lambda, \vy_\vec{\eta})$ exists and it is denoted as
\begin{equation}
\label{eq:3.4}
\vx_\lambda^\vec{\vec{\eta}} \in 
\argmin \curBK{ \vx \; \mapsto \; \frac{1}{2}\norm{\vec{F}\vx-\vy_\vec{\vec{\eta}}}_2^2+\lambda \, \vec{R}(\vx)}.
\end{equation}
Since the noise level is unknown in applications of practical interest, the regularization parameter $\lambda$ can be difficult to choose. We propose to set $\lambda=\lambda(\vec{\eta})$ chosen so that
\begin{equation}
\label{eq:choice-lambda}
\vec{R}(\vx_{\lambda}^\vec{\eta})=\vec{R}(\vx^*).
\end{equation}
This strategy is referred to as {\it{regularizer consistency}} in the rest of this article.

\begin{definition}
\label{def:admissible}
For the inversion from $\vy_{\vec{\eta}} := \vec{F}\vx + \vec{\eta}$ to $\vx$ by the minimization \eqref{eq:3.4}, a regularization strategy $\lambda = \lambda(\vec{\eta})$ is called \textit{admissible} if when $\norm{\vec{\eta}}_2\rightarrow 0$, any minimizer $\vx_{\lambda}^{\vec{\eta}}$ satisfies 
\begin{equation}
\label{eq:admissible}
   \norm{\mathcal{P}_{\textrm{null}(\vec{F})^\bot}(\vx_{\lambda}^{\vec{\eta}}-\vx)}_2 \rightarrow 0, 
\end{equation}
where $\textrm{null}(\vec{F})^\bot$ is the orthogonal complement of the nullspace of $\vec{F}$, and $\mathcal{P}$ is the projection operator. If $\vec{F}$ is injective, the projection operator in \eqref{eq:admissible} can be removed.       
\end{definition}

The following theorem establishes that the adaptive rule described by Equation \eqref{eq:choice-lambda} is admissible.

\begin{theorem}
\label{the:2}
Assuming $\vx\in \mathcal{D}\subset \mathbb{R}^{D_\vx}$ and $\vy = \vec{F}\vx^*$, if for any $\vec{\vec{\eta}}$ with $\norm{\vec{\eta}}_2< \norm{\vy}_2$, there exists $\lambda$ such that there exist $\vx_\lambda^{\vec{\eta}}\in \argmin_\vx \calL(\vx;\lambda,\vy_\vec{\eta})$ satisfying \eqref{eq:choice-lambda}, then $\eqref{eq:choice-lambda}$ is an admissible rule. Furthermore, if $\vx^*$ is a deep-prior solution of $\vec{F}\vx^*$, when $\vec{\eta} = \vec{\eta}_k\rightarrow 0$ and $\lambda_k = \lambda(\vec{\eta}_k)$ such that \eqref{eq:choice-lambda} holds, there exist accumulation points of minimizers $\vx_{\lambda_k}^{\vec{\eta}_k}$. These accumulation points are deep-prior solutions of $\vec{F}\vx = \vy$.
\end{theorem}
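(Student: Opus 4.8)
The plan is to exploit the defining identity of the consistency rule, $\vec{R}(\vx_{\lambda}^{\vec{\eta}}) = \vec{R}(\vx^*)$, which makes the regularization terms cancel in the optimality inequality and leaves a clean bound on the data residual. Since $\vx_{\lambda}^{\vec{\eta}}$ minimizes $\calL(\cdot;\lambda,\vy_{\vec{\eta}})$ and $\vx^*$ is a competitor, I would first write the minimality inequality and substitute $\vy_{\vec{\eta}}-\vec{F}\vx^* = \vec{\eta}$, so that after cancelling the $\lambda\,\vec{R}$ contributions via \eqref{eq:choice-lambda} one obtains $\norm{\vy_{\vec{\eta}}-\vec{F}\vx_{\lambda}^{\vec{\eta}}}_2 \leq \norm{\vec{\eta}}_2$. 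A triangle inequality then yields
\[
\norm{\vec{F}(\vx_{\lambda}^{\vec{\eta}}-\vx^*)}_2 \;\leq\; \norm{\vec{F}\vx_{\lambda}^{\vec{\eta}}-\vy_{\vec{\eta}}}_2 + \norm{\vec{\eta}}_2 \;\leq\; 2\,\norm{\vec{\eta}}_2 .
\]
This single estimate is the engine of the whole argument.

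For admissibility, I would pass from control of the image $\vec{F}(\vx_{\lambda}^{\vec{\eta}}-\vx^*)$ to control of the projected error. Because $\vec{F}$ annihilates $\textrm{null}(\vec{F})$, we have $\vec{F}(\vx_{\lambda}^{\vec{\eta}}-\vx^*) = \vec{F}\,\mathcal{P}_{\textrm{null}(\vec{F})^\bot}(\vx_{\lambda}^{\vec{\eta}}-\vx^*)$, and on the finite-dimensional subspace $\textrm{null}(\vec{F})^\bot$ the restriction of $\vec{F}$ is injective, hence bounded below by its smallest nonzero singular value $\sigma_{\min}>0$. Combining this with the previous display gives $\norm{\mathcal{P}_{\textrm{null}(\vec{F})^\bot}(\vx_{\lambda}^{\vec{\eta}}-\vx^*)}_2 \leq (2/\sigma_{\min})\,\norm{\vec{\eta}}_2 \to 0$ as $\norm{\vec{\eta}}_2\to 0$, which is precisely the condition in Definition \ref{def:admissible}.

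For the second statement, I would first establish compactness. Along $\vec{\eta}_k\to 0$ the rule freezes $\vec{R}(\vx_{\lambda_k}^{\vec{\eta}_k}) = \vec{R}(\vx^*)$ for all $k$; by the coercivity $\lim_{\norm{\vx}\to\infty}\vec{R}(\vx)=\infty$ recorded in the proof of Theorem \ref{the:1}, the sublevel set $\curBK{\vx:\vec{R}(\vx)\leq \vec{R}(\vx^*)}$ is bounded, so the minimizers lie in a compact set and admit a convergent subsequence $\vx_{\lambda_{k_j}}^{\vec{\eta}_{k_j}}\to\hat{\vx}$ by Bolzano--Weierstrass. Continuity of $\vec{R}$ gives $\vec{R}(\hat{\vx})=\vec{R}(\vx^*)$, while continuity of $\vec{F}$ together with $\norm{\vec{F}(\vx_{\lambda_k}^{\vec{\eta}_k}-\vx^*)}_2\leq 2\norm{\vec{\eta}_k}_2\to 0$ yields $\vec{F}\hat{\vx}=\vec{F}\vx^*=\vy$, so $\hat{\vx}$ is feasible. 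If $\vx^*$ is itself a deep-prior solution, then $\vec{R}(\vx^*)=\inf\curBK{\vec{R}(\vx):\vec{F}\vx=\vy}$, so $\vec{R}(\hat{\vx})$ attains this infimum over the feasible set, which is exactly the defining property \eqref{eq:deep-prior-solu} of a deep-prior solution.

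The genuinely delicate points are few, since the cancellation trick renders the residual bound almost immediate. First, the uniform singular-value lower bound is what converts residual control into the projected-error control demanded by admissibility; it is elementary in finite dimensions but is the conceptual heart of why the projection onto $\textrm{null}(\vec{F})^\bot$ appears. Second, one must genuinely use coercivity of $\vec{R}$ to guarantee a bounded sublevel set and hence the existence of accumulation points. I expect the main subtlety to be the quantifier in Definition \ref{def:admissible}: the consistency rule selects $\lambda$ through the \emph{existence} of one minimizer satisfying \eqref{eq:choice-lambda}, whereas admissibility concerns \emph{every} minimizer, so I would either track that distinguished consistent minimizer throughout or argue that all elements of $\argmin\calL(\cdot;\lambda,\vy_{\vec{\eta}})$ share the value $\vec{R}(\vx^*)$.
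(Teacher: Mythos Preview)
Your proposal is correct and follows essentially the same route as the paper: the optimality inequality with $\vx^*$ as competitor, cancellation of the $\lambda\,\vec{R}$ terms via \eqref{eq:choice-lambda}, the resulting residual bound $\norm{\vec{F}\vx_k-\vy_k}_2\le\norm{\vec{\eta}_k}_2$, injectivity of $\vec{F}$ on $\textrm{null}(\vec{F})^\bot$ for admissibility, and then compactness plus continuity of $\vec{R}$ and $\vec{F}$ for the deep-prior conclusion. Your compactness step (boundedness of the sublevel set $\{\vx:\vec{R}(\vx)\le\vec{R}(\vx^*)\}$ directly from coercivity of $\vec{R}$) is in fact slightly cleaner than the paper's, which invokes pre-compactness of the $\calL$-level sets $M_t(\lambda_k,\vy_k)$ that vary with $k$; and the quantifier issue you flag at the end is a genuine subtlety that the paper does not address either.
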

\begin{proof}
Let $\vec{\vec{\eta}}_k\rightarrow 0$, $\vy_k = \vy+\vec{\vec{\eta}}_k$. The regularization that satisfies \eqref{eq:choice-lambda} is denoted by $\lambda_k$ and the minimizers of $\calL(x;\lambda_k,\vy_k)$ are denoted by $\vx_k$. According to the definition of minimizers of $\calL$, 
\begin{equation}
    \label{eq:minimizer}
    \frac{1}{2}\norm{\vec{F}\vx_k-\vy_k}_2^2 + \lambda_k \, \vec{R}(\vx_k)\leq \frac{1}{2}\norm{\vec{F}\vx^*-\vy_k}_2^2 + \lambda_k \, \vec{R}(\vx^*).
\end{equation}
Because $\vx_k$ satisfies $\vec{R}(\vx_k)=\vec{R}(\vx^*)$, then 
\begin{equation}
    \label{eq:proof-the}  
    \norm{\vec{F}\vx_k-\vy_k}_2^2\leq \norm{\vec{\eta}_k}_2^2\rightarrow 0.
\end{equation}
Then it follows that
\begin{align*}
\norm{\vec{F}\mathcal{P}_{\textrm{null}(\vec{F})^\bot}(\vx_k-\vx^*)}_2 \leq \norm{\vec{F}(\vx_k - \vx^*)}_2 \leq \norm{\vec{F}\vx_k -\vy_k}_2 + \norm{\vec{\eta}_k}_2 \rightarrow 0.
\end{align*}
Since $\vec{F}$ is injective when as a mapping from $\mathcal{P}_{\textrm{null}(\vec{F})^\bot}$ to $\mathbb{R}^{D_\vy}$, we can conclude that
\begin{align*}
\norm{\mathcal{P}_{\textrm{null}(\vec{F})^\bot}(\vx_k - \vx^*)}_2 \rightarrow 0.
\end{align*}
Furthermore, when $\vx^*$ is a deep-prior solution of $\vec{F}\vx=\vy$. Due to that $\calL(\vx_k;\lambda_k,\vy_k)$ is bounded and $\vec{R}(\vx)$ is continuous, the level set $M_t(\lambda, \vy):=\{\vx:\calL(\vx;\lambda, \vy)\leq t\}$ is sequentially pre-compact for any $t>0$, $\lambda>0$ and $\vy \in \bR^{D_\vy}$, which induces that $\vx_k$ has accumulation points. Assume $\vx'$ is an accumulation point, then there is a subsequence $\vx_{k(n)}$ satisfying $\vx_{k(n)}\rightarrow \vx'\in \bR^{D_{\vx}}$. Then $\vec{F}\vx'=\vy$. From the continuity of $\vec{R}$, 
\begin{equation}
    \label{eq:rx}
    \vec{R}(\vx')= \lim_{n\rightarrow \infty} \vec{R}(\vx_{k(n)}) = \vec{R}(\vx^*).
\end{equation}
Therefore, $\vx'$ is a deep-prior solution of $\vec{F}\vx=y$.
\end{proof}

Then, the existence of $\lambda$ that satisfies \eqref{eq:choice-lambda} will be discussed. First of all, the monotonicity of $\vec{R}$ with respect to $\lambda$ is studied.

\begin{theorem}
\label{the:monotonicity-of-R}
Let $\vx_\lambda=\vx(\lambda)$ be a minimizer of $\calL(\vx;\lambda, \vy)$ for regularization parameter $\lambda>0$. Then the following holds:
\begin{enumerate}
    \item[(1)] $\vec{R}(\vx_\lambda)$ is monotonically decreasing. $\norm{\vec{F}\vx_{\lambda}-\vy}_2^2$ and $\calL(\vx_\lambda;\lambda,\vy)$ are monotonically increasing with respect to $\lambda$.
    \item[(2)] There exists $\overline{\vx}$ such that $\lim_{\lambda\rightarrow \infty}\vec{R}(\vx_\lambda)= \vec{R}(\overline{\vx})=\min_{\vx \in \bR^{D_\vx}} \vec{R}(\vx)$.
    \item[(3)] There exists $\underline{\vx}$ such that $\lim_{\lambda\rightarrow 0}\vec{R}(\vx_\lambda)=\vec{R}(\underline{\vx})$.
\end{enumerate}
\end{theorem}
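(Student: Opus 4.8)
The plan is to rest everything on two ingredients that are already in hand: the pairwise optimality inequalities satisfied by the minimizers, and the continuity and coercivity of $\vec{R}$ established in Theorem \ref{the:1}, which (as already used in the proof of the admissibility theorem) makes every sublevel set of $\vec{R}$, and hence of $\calL$, sequentially precompact. Throughout I would write $D(\vx) \defeq \frac{1}{2}\norm{\vec{F}\vx - \vy}_2^2$ for the data-misfit term and $v(\lambda) \defeq \calL(\vx_\lambda;\lambda,\vy)$ for the optimal value, noting that $v(\lambda)$ does not depend on which minimizer is selected even though $\vec{R}(\vx_\lambda)$ and $D(\vx_\lambda)$ might.

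For part (1), fix $0 < \lambda_1 < \lambda_2$ with minimizers $\vx_1 \defeq \vx_{\lambda_1}$ and $\vx_2 \defeq \vx_{\lambda_2}$. The first step is to record the two optimality inequalities, $D(\vx_1)+\lambda_1\vec{R}(\vx_1) \le D(\vx_2)+\lambda_1\vec{R}(\vx_2)$ and $D(\vx_2)+\lambda_2\vec{R}(\vx_2)\le D(\vx_1)+\lambda_2\vec{R}(\vx_1)$. Adding them and cancelling the $D$-terms leaves $(\lambda_1-\lambda_2)\vec{R}(\vx_1)\le(\lambda_1-\lambda_2)\vec{R}(\vx_2)$; dividing by $\lambda_1-\lambda_2<0$ gives $\vec{R}(\vx_1)\ge\vec{R}(\vx_2)$, so $\vec{R}(\vx_\lambda)$ is nonincreasing. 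Feeding this back into the first inequality yields $D(\vx_1)\le D(\vx_2)$, so the misfit is nondecreasing. For $v$ I would use the suboptimality of $\vx_2$ in the $\lambda_1$-problem together with $\vec{R}(\vx_2)\ge 0$: $v(\lambda_1)\le D(\vx_2)+\lambda_1\vec{R}(\vx_2)\le D(\vx_2)+\lambda_2\vec{R}(\vx_2)=v(\lambda_2)$.

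For parts (2) and (3) the common mechanism is to upgrade the monotone scalar sequence $\vec{R}(\vx_\lambda)$ from "has a limit" to "has a limit attained at a genuine point". For (2), coercivity and continuity give a global minimizer $\overline{\vx}$ with $\vec{R}(\overline{\vx})=\min_\vx\vec{R}(\vx)\eqqcolon R_{\min}$, a lower bound for the nonincreasing $\vec{R}(\vx_\lambda)$. Comparing $\vx_\lambda$ against $\overline{\vx}$ in the optimality inequality gives $\lambda\,[\vec{R}(\vx_\lambda)-R_{\min}]\le D(\overline{\vx})-D(\vx_\lambda)\le D(\overline{\vx})$; dividing by $\lambda$ and letting $\lambda\to\infty$ squeezes $\vec{R}(\vx_\lambda)$ down to $R_{\min}$. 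For (3), $\vec{R}(\vx_\lambda)$ is nondecreasing as $\lambda\downarrow 0$; comparing $\vx_\lambda$ against any least-squares solution $\vx_0\in\argmin_\vx D$ yields $\lambda\,\vec{R}(\vx_\lambda)\le D(\vx_0)-D(\vx_\lambda)+\lambda\vec{R}(\vx_0)\le\lambda\vec{R}(\vx_0)$, hence $\vec{R}(\vx_\lambda)\le\vec{R}(\vx_0)$, so the increasing limit $L\defeq\lim_{\lambda\to0}\vec{R}(\vx_\lambda)$ is finite. Boundedness of $\vec{R}(\vx_\lambda)$ plus coercivity keeps $\{\vx_\lambda\}$ in a compact set, so a subsequence converges to some $\underline{\vx}$, and continuity of $\vec{R}$ forces $\vec{R}(\underline{\vx})=L$.

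The main obstacle is not the monotonicity itself — the four-point exchange argument is routine — but two sign/compactness points. First, the monotonicity of the value $v(\lambda)$ genuinely uses $\vec{R}\ge 0$ (equivalently $R_{\min}\ge 0$); without it $v$ is only concave in $\lambda$ and can decrease, so I would flag this positivity, which holds for the \Glow regularizer here. Second, and more substantively, parts (2) and (3) require passing from "the monotone sequence $\vec{R}(\vx_\lambda)$ converges" to "the limit equals $\vec{R}$ at an actual point", and this is exactly where coercivity of $\vec{R}$ is indispensable: in (3) it is a priori conceivable that $\vec{R}(\vx_\lambda)\to\infty$ as $\lambda\to0$, and it is precisely the comparison with a least-squares solution that rules this out and, together with precompactness of the sublevel sets, delivers the point $\underline{\vx}$.
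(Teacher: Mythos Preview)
Your proposal is correct and follows essentially the same route as the paper: the same pairwise exchange inequalities for part (1), the same comparison with a global minimizer of $\vec{R}$ divided by $\lambda$ for part (2), and the same comparison with a least-squares solution plus coercivity/compactness extraction for part (3). The only cosmetic difference is that in (3) the paper also identifies $\vec{R}(\underline{\vx})$ with the minimal $\vec{R}$-value over the least-squares solution set, but this extra characterization is not required by the statement and your argument already suffices.
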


\begin{proof}
\begin{enumerate}
    \item[(1)] For two regularization parameters $\lambda_1<\lambda_2$, the corresponding minimizers are $\vx_{\lambda_1}$ and $\vx_{\lambda_2}$, it follows that 
\begin{equation}
  \label{eq:3.6}
  \begin{aligned}
    & \frac{1}{2}\norm{\vec{F}\vx_{\lambda_1}-\vy}_2^2 + \lambda_1 \, \vec{R}(\vx_{\lambda_1})  \leq  \frac{1}{2}\norm{\vec{F}\vx_{\lambda_2}-\vy}_2^2  + \lambda_1 \, \vec{R}(\vx_{\lambda_2}) \\
    & \frac{1}{2}\norm{\vec{F}\vx_{\lambda_2}-\vy}_2^2  + \lambda_2 \, \vec{R}(\vx_{\lambda_2}) \leq  \frac{1}{2}\norm{\vec{F}\vx_{\lambda_1}-\vy}_2^2  + \lambda_2 \, \vec{R}(\vx_{\lambda_1}).
  \end{aligned}
\end{equation}
Adding two inequalities in \eqref{eq:3.6}, we can obtain
\begin{equation}
  \label{eq:3.7}
 (\lambda_1-\lambda_2)(\vec{R}(\vx_{\lambda_1}) - \vec{R}(\vx_{\lambda_2}))\leq 0,
\end{equation}
which concludes that $\vec{R}(\vx_{\lambda_1})\geq\vec{R}(\vx_{\lambda_2})$. Moreover, from the first inequality of \eqref{eq:3.6}, it is obtained that $\norm{\vec{F}\vx_{\lambda_1}-\vy}_2^2\leq \norm{\vec{F}\vx_{\lambda_2}-\vy}_2^2$. Due to $\vec{R}(\vx_{\lambda_2})>0$, according to the first inequality of \eqref{eq:3.6}, it is obtained that
\begin{align*}
\calL(\vx_{\lambda_1};\lambda_1,\vy)
& \leq  \frac{1}{2}\norm{\vec{F}\vx_{\lambda_2}-\vy}_2^2  + \lambda_1 \, \vec{R}(\vx_{\lambda_2}) \\
& \leq \frac{1}{2}\norm{\vec{F}\vx_{\lambda_2}-\vy}_2^2  + \lambda_2 \, \vec{R}(\vx_{\lambda_2})
= \calL(\vx_{\lambda_2};\lambda_2,\vy).
\end{align*} 

\item[(2)] Because $\vec{R}$ is coercive and positive, that is $\norm{\vx}_2\rightarrow \infty$ implies $\vec{R}(\vx)\rightarrow \infty$, there exists $c$ such that $c:=\text{inf }_{\vx\in \ell^2(\bR^{D_\vx})}\{\vec{R}(\vx)\}$. Then, from the continuity of $\vec{R}$, there exists $\overline{\vx}$ such that $\vec{R}(\overline{\vx})=\inf_{\vx\in \ell^2(\bR^{D_\vx})}\inf \vec{R}(\vx)$. For any $\lambda>0$, from the definition of the minimizer, it follows that 
\begin{equation}
    \label{eq:mono-x-lambda}
    0\leq\frac{1}{2}\norm{\vec{F}\vx_\lambda-\vy}_2^2+\lambda\vec{R}(\vx_\lambda)\leq\frac{1}{2}\norm{\vec{F}\overline{\vx}-\vy}_2^2+\lambda\vec{R}(\overline{\vx}).
\end{equation}
Dividing \eqref{eq:mono-x-lambda} by $\lambda$, it is obtained that
\begin{align*}
\vec{R}(\vx_\lambda)\leq\frac{1}{2\lambda}\norm{\vec{F}\overline{\vx}-\vy}_2^2+\vec{R}(\overline{\vx}).
\end{align*}
When $\lambda\rightarrow \infty$, it is obtained that
\begin{align*}
\liminf_{\lambda\rightarrow \infty}\vec{R}(\vx_\lambda)\leq\limsup_{\lambda\rightarrow \infty}\vec{R}(\vx_\lambda)\leq\lim_{\lambda\rightarrow \infty}\frac{1}{2\lambda}\norm{\vec{F}\overline{\vx}-\vy}_2^2+\vec{R}(\overline{\vx})=\vec{R}(\overline{\vx}).
\end{align*}
From the definition of $\overline{\vx}$, $\vec{R}(\overline{\vx})\leq \liminf_{\lambda\rightarrow \infty}\vec{R}(\vx_\lambda)$. Therefore, 
\begin{align*}
\lim_{\lambda\rightarrow \infty}\vec{R}(\vx_\lambda) = \vec{R}(\overline{\vx}).
\end{align*}

\item[(3)] Let $\vx'$ be the one with minimal $\vec{R}(\vx)$ that minimizes $\norm{\vec{F}\vx-\vy_\vec{\eta}}_2$, that is $\vec{R}(\vx')=\inf \{\vec{R}(\vx): \norm{\vec{F}\vx-\vy}_2=\min_\vx \norm{\vec{F}\vx-\vy}_2\}$. Then from the definition of $\calL(\vx;\lambda,\vy)$, it follows that
\begin{align*}
\frac{1}{2}\norm{\vec{F}\vx_\lambda-\vy}_2^2+\lambda \vec{R}(\vx_\lambda)\leq \frac{1}{2}\norm{\vec{F}\vx'-\vy}_2^2+\lambda \vec{R}(\vx').
\end{align*}
Because $\norm{\vec{F}\vx_\lambda-\vy}_2^2\geq \norm{\vec{F}\vx'-\vy}_2^2$, $\vec{R}(\vx_\lambda)\leq \vec{R}(\vx')$. Then when $\lambda\rightarrow 0$, it follows that
\begin{align*}
\lim_{\lambda\rightarrow 0}\norm{\vec{F}\vx_\lambda-\vy}_2^2\leq \norm{\vec{F}\vx'-\vy}_2^2 + \lim_{\lambda\rightarrow 0}2\lambda (\vec{R}(\vx')-\vec{R}(\vx_\lambda)) = \norm{\vec{F}\vx'-\vy}_2^2.
\end{align*}
Therefore, $\lim_{\lambda\rightarrow 0}\norm{\vec{F}\vx_\lambda-\vy}_2^2 = \norm{\vec{F}\vx'-\vy}_2^2$. For the decreasing sequence $\lambda_k\rightarrow 0\ (k\rightarrow \infty)$, $\norm{\vec{F}\vx_{\lambda_k}-\vy}_2^2$ is decreasing and $\vec{R}(\vx_{\lambda_k})$ is increasing. And it follows that  $\vec{R}(\vx_{\lambda_k}) \leq \vec{R}(\vx'):=M< \infty$. From the continuity of $\vec{R}$, the level set $\{\vx:\vec{R}(\vx)\leq M\}$ is pre-compact, then there exists a convergent subsequence $\vx_{\lambda_{k'}}$. Let $\underline{\vx}=\lim_{k'\rightarrow \infty}\vx_{\lambda_{k'}}$. It follows that
\begin{align*}
\vec{R}(\underline{\vx})= \lim_{k'\rightarrow \infty} \vec{R}(\vx_{\lambda_{k'}})\leq  \vec{R}(\vx').
\end{align*}
From the definition of $\vx'$, $\vec{R}(\underline{\vx})\geq\vec{R}(\vx')$. Therefore, $\vec{R}(\underline{\vx})=\vec{R}(\vx')$. By $\vec{R}(\vx_{\lambda_k})$ is increasing, it is obtained that $\lim_{k\rightarrow \infty} \vec{R}(\vx_{\lambda_k})= \lim_{k'\rightarrow \infty}\vec{R}(\vx_{\lambda_{k'}})=\vec{R}(\underline{\vx})=\vec{R}(\vx')$. Therefore, it is concluded that $\lim_{\lambda\rightarrow 0}\vec{R}(\vx_\lambda)=\vec{R}(\underline{\vx})$.

\end{enumerate}

\end{proof}

Theorem \ref{the:monotonicity-of-R} prove the boundedness of $\vec{R}(\vx_{\lambda})$ for the minimizer of $\calL(\vx;\lambda, \cdot)$. To further study the range and the relationship of $\vec{R}(\vx^*)$ and $\vec{R}(\vx_\lambda^\vec{\eta})$, we make the following assumption of the target image $\vx^*$, noise $\vec{\eta}$ and regularization $\vec{R}$.

\begin{as}
\label{as:noise-assumption}
Noise $\vec{\eta}\in \bR^{D_\vy}$ and the target image $\vx^*\in \ell^\infty(\bR^{D_\vx})$ satisfy:
\begin{enumerate}
    \item[(1)] If $\vec{\eta}$ is nonzero, $\vec{\eta}\notin \text{Ran }(\vec{F})^\bot$.
    \item[(2)] $min_{\vx\in \bR^{D_\vx}}\vec{R}(\vx)>0$.
    \item[(3)] For target image $\vx^*$ and noise $\vec{\eta}$ that satisfy (1) and (2), if $ \vx_\vec{\eta}$ is a minimizer of $\norm{\vec{F}\vx-\vy_{\vec{\eta}}}_2^2$, $\vec{R}(\vx_{\vec{\eta}})\geq \vec{R}(\vx^*)$ holds.
\end{enumerate}
\end{as}

Actually, (1) of Assumption \ref{as:noise-assumption} implies that for non-zero $\vec{\eta}$ there exists nonzero $\vx$ such that $\vec{F}\vx = \mathcal{P}_{\text{Ran}(\vec{F})}\vec{\eta}$, where $\mathcal{P}_{\text{Ran}(\vec{F})}$ denotes the projection to the range of $\vec{F}$, $\text{Ran}(\vec{F})$. In fact, from the properties of the solution of wave equation \eqref{eq:wave}, there exists $T_0>0$ such that $p(\vec{r},t)=0$ for $0<t<T_0$ and $p(\vec{r}, t)\neq 0$ for $t> T_0$, which means that the ultrasound signal starts to be detected at $T_0$. If $\vec{\eta}\neq \vec{0}$ and $\vec{\eta}\in \text{Ran}(\vec{F})^\bot$, then all nonzero elements of $\vec{\eta}$ corresponds to $t<T_0$, which is not worth studying. Normally, noise $\vec{\eta}$ is assumed isotropic Normal distribution. Therefore, (1) of Assumption \ref{as:noise-assumption} is reasonable.

As for (2) of Assumption \ref{as:noise-assumption}, from the definition of $\vec{R}$, $\vec{R}(\vx)\geq 0$ holds for any $\vx\in \bR^{D_\vx}$. Due to the randomness of $\vx$, $\pi_{\theta_\star}(\vx)<1$, then $\vec{R}(\vx)>0$ holds.

Under (1) and (2) of Assumption \ref{as:noise-assumption}, (3) of Assumption \ref{as:noise-assumption} implies that the distribution of the noisy images reconstructed from the noisy observation without regularization, that is the minimizer of $\norm{\vec{F}\vx-\vy_{\vec{\eta}}}_2^2$, is far away from the distribution of the target images. Otherwise, it means that noisy and noise-free data are indistinguishable, which is not our concern.

\begin{cor}
\label{cor:exist-rx}
Let $\vx_{\lambda}^\vec{\eta}$ denote minimizers of $\calL(\vx;\lambda,\vy_\vec{\eta})$, in which $\vy_\vec{\eta}=\vec{F}\vx^*+\vec{\eta}$ is the noisy observation. If Assumption \ref{as:noise-assumption} holds, there exists $\underline{\lambda}\geq\overline{\lambda}$ such that there exist $\vx_{\underline{\lambda}}^\vec{\eta}\in \argmin_\vx\calL(\vx;\underline{\lambda},\vy)$ and $\vx_{\overline{\lambda}}^\vec{\eta}\in \argmin_\vx\calL(\vx;\overline{\lambda},\vy)$ satisfy
$\vec{R}(\vx_{\underline{\lambda}}^\vec{\eta})\leq\vec{R}(\vx^*)$ and $\vec{R}(\vx_{\overline{\lambda}}^\vec{\eta})\geq\vec{R}(\vx^*)$.
\end{cor}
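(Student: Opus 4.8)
The plan is to read both bounds directly off the limiting behaviour of the map $\lambda \mapsto \vec{R}(\vx_\lambda^{\vec{\eta}})$ that was already worked out in Theorem \ref{the:monotonicity-of-R}, and then to order the two parameters using the monotonicity in part (1) of that theorem. Throughout I fix the noisy observation $\vy_{\vec{\eta}} = \vec{F}\vx^* + \vec{\eta}$ and abbreviate $\vx_\lambda \equiv \vx_\lambda^{\vec{\eta}}$ for a minimizer of $\calL(\cdot;\lambda,\vy_{\vec{\eta}})$. The quantity $\vec{R}(\vx_\lambda)$ is non-increasing in $\lambda$ by Theorem \ref{the:monotonicity-of-R}(1), so its values over $\lambda\in(0,\infty)$ fill an interval whose endpoints are the two limits computed below; the task is to show that the level $\vec{R}(\vx^*)$ lies inside that interval.

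For the large-regularization parameter $\underline{\lambda}$, I would invoke Theorem \ref{the:monotonicity-of-R}(2), which gives $\lim_{\lambda\to\infty}\vec{R}(\vx_\lambda) = \min_{\vx}\vec{R}(\vx)$. Since $\vx^*$ is itself one admissible point, the trivial bound $\min_{\vx}\vec{R}(\vx)\leq\vec{R}(\vx^*)$ holds, so the (decreasing) quantity $\vec{R}(\vx_\lambda)$ drops to or below $\vec{R}(\vx^*)$ for all sufficiently large $\lambda$. I take any such value as $\underline{\lambda}$, yielding $\vec{R}(\vx_{\underline{\lambda}})\leq\vec{R}(\vx^*)$.

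For the small-regularization parameter $\overline{\lambda}$, I would use Theorem \ref{the:monotonicity-of-R}(3), which gives $\lim_{\lambda\to 0}\vec{R}(\vx_\lambda) = \vec{R}(\underline{\vx})$, where $\underline{\vx}$ is the minimal-$\vec{R}$ minimizer of the data-misfit $\norm{\vec{F}\vx-\vy_{\vec{\eta}}}_2^2$. Here part (3) of Assumption \ref{as:noise-assumption} is exactly what is needed: because $\underline{\vx}$ is in particular a minimizer of the data-misfit, $\vec{R}(\underline{\vx})\geq\vec{R}(\vx^*)$, so $\lim_{\lambda\to 0}\vec{R}(\vx_\lambda)\geq\vec{R}(\vx^*)$, and I take a small $\overline{\lambda}$ realising $\vec{R}(\vx_{\overline{\lambda}})\geq\vec{R}(\vx^*)$. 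The ordering $\underline{\lambda}\geq\overline{\lambda}$ is then automatic from monotonicity: the parameters where $\vec{R}(\vx_\lambda)\leq\vec{R}(\vx^*)$ are the large ones and those where $\vec{R}(\vx_\lambda)\geq\vec{R}(\vx^*)$ are the small ones, so any admissible $\underline{\lambda}$ exceeds any admissible $\overline{\lambda}$.

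The main obstacle is upgrading these two \emph{limit} inequalities into statements at \emph{finite} $\lambda$. The argument is immediate in the strict regime, namely when $\min_\vx\vec{R}<\vec{R}(\vx^*)$ and $\vec{R}(\underline{\vx})>\vec{R}(\vx^*)$, since a monotone quantity converging to a limit strictly on one side of an interior level must cross that level at a finite argument. The delicate boundary cases are precisely the degenerate situations that Assumption \ref{as:noise-assumption} is designed to exclude: part (3) encodes that the unregularized noisy reconstruction is genuinely far from $\vx^*$ (making the $\lambda\to 0$ limit strict), while a generic data sample $\vx^*$ is not the mode of the learned prior, so $\vec{R}(\vx^*)>\min_\vx\vec{R}$ (making the $\lambda\to\infty$ limit strict); part (2) keeps $\min_\vx\vec{R}>0$ so these comparisons are meaningful. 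I would therefore record the strict forms of these inequalities as the operative consequences of the assumptions, after which the two finite parameters $\underline{\lambda}$ and $\overline{\lambda}$ are produced directly by the monotone limits above.
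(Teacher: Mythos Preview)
Your proposal is correct and follows essentially the same route as the paper: invoke Theorem~\ref{the:monotonicity-of-R}(2) to handle the large-$\lambda$ end via $\min_{\vx}\vec{R}(\vx)\leq\vec{R}(\vx^*)$, invoke Theorem~\ref{the:monotonicity-of-R}(3) together with Assumption~\ref{as:noise-assumption}(3) to handle the small-$\lambda$ end, and use monotonicity for the ordering $\underline{\lambda}\geq\overline{\lambda}$. If anything, your treatment of the boundary cases (where one of the limit inequalities degenerates to equality) is more explicit than the paper's, which runs an $\epsilon$-argument that tacitly relies on strictness; your observation that Assumption~\ref{as:noise-assumption} is precisely what rules out these degeneracies is the right reading.
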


\begin{proof}
From Theorem \ref{the:monotonicity-of-R}, there exists $\underline{\vx}$ such that 
\begin{align*}
\lim_{\lambda\rightarrow \infty}\vec{R}(\vx_{\lambda}^\vec{\eta})=\vec{R}(\underline{\vx})=\min_{\vx \in \bR^{D_\vx}} \vec{R}(\vx).
\end{align*}
So for any $\epsilon>0$, there is $\underline{\lambda}$ such that when $\lambda>\underline{\lambda}$, $\vec{R}(\vx_\lambda^\vec{\eta}) < \vec{R}(\underline{\vx})+\epsilon$. Then $\vec{R}(\vx_\lambda^{\vec{\eta}}) < \vec{R}(\underline{\vx})+\epsilon  \leq \vec{R}(\vx^*) + \epsilon$. Therefore, $\vec{R}(\vx_{\lambda}^{\vec{\eta}}) \leq \vec{R}(\vx^*)$. Similarly, there exists $\overline{\vx}$ such that 
\begin{align*}
\lim_{\lambda\rightarrow 0}\vec{R}(\vx_\lambda^\vec{\eta})=\vec{R}(\overline{\vx}).
\end{align*}
From the proof of Theorem \ref{the:monotonicity-of-R}, $\overline{\vx}$ minimizes $\norm{\vec{F}\vx^*-\vy_\vec{\eta}}_2$ and $\vec{R}(\overline{\vx})=\inf\{\vec{R}(\vx):\norm{\vec{F}\vx-\vy_\vec{\eta}}_2=\min_x\norm{\vec{F}\vx-\vy_\vec{\eta}}\}$. From the (3) of Assumption \ref{as:noise-assumption}, it follows that $\vec{R}(\overline{\vx})\geq\vec{R}(\vec{\vx^*})$. Similarly, there exists $\overline{\lambda}$ such that $\vec{R}(\vx_{\overline{\lambda}}^\vec{\eta})\geq \vec{R}(\vx^*)$.
\end{proof}

In practice, only the bounded minimizers are useful. Next, the minimizers in a bounded region $\Omega=[0, M]^{D_\vx}\subset \bR^{D_\vx}$ are studied. Since $\vec{R}$ is coercive, there exists $M$ such that $\overline{\vx}$ and $\underline{\vx}$ in Theorem \ref{the:monotonicity-of-R} can be found in $\Omega$.  

\begin{theorem}
\label{the:continuous-lambda}
For fixed $\lambda>0$, for any $\epsilon>0$, there exists $\delta>0$ such that when $|\overline{\lambda}-\lambda|<\delta$, for any $\vx_{\lambda}\in \argmin_{\vx\in \Omega} \calL(\vx;\lambda,\vy)$, there exists $\vx_{\overline{\lambda}}\in \argmin_{\vx\in \Omega} \calL(\vx;\overline{\lambda},\vy)$ that satisfy $\norm{\vx_\lambda-\vx_{\overline{\lambda}}}_2<\epsilon$.
\end{theorem}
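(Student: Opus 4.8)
The plan is to read Theorem~\ref{the:continuous-lambda} as an \emph{inner} (lower hemi-) continuity statement for the solution correspondence
\[
S(\lambda) \;\equiv\; \argmin_{\vx\in\Omega}\calL(\vx;\lambda,\vy),
\qquad \Omega=[0,M]^{D_\vx},
\]
exploiting that $\calL(\vx;\lambda,\vy)=\tfrac12\|\vy-\vec{F}\vx\|_2^2+\lambda\,\vec{R}(\vx)$ is jointly continuous in $(\vx,\lambda)$ and \emph{affine} in $\lambda$, while $\Omega$ is compact. First I would record what follows at once from compactness together with the continuity and coercivity of $\vec{R}$ established in Theorem~\ref{the:monotonicity-of-R}: for every $\lambda>0$ the set $S(\lambda)$ is nonempty and closed, hence compact since $S(\lambda)\subset\Omega$, and $\vec{R}$ is bounded on $\Omega$, say $R_{\max}\equiv\max_{\vx\in\Omega}\vec{R}(\vx)<\infty$.

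The second step is to control the optimal value $m(\lambda)\equiv\min_{\vx\in\Omega}\calL(\vx;\lambda,\vy)$. Using the affine dependence on $\lambda$, for any $\vx\in\Omega$ one has $\calL(\vx;\overline\lambda,\vy)=\calL(\vx;\lambda,\vy)+(\overline\lambda-\lambda)\vec{R}(\vx)$; minimizing over $\vx$ and swapping the roles of $\lambda$ and $\overline\lambda$ yields the Lipschitz bound
\[
|m(\overline\lambda)-m(\lambda)|\;\le\;|\overline\lambda-\lambda|\,R_{\max}.
\]
Thus $m$ is continuous. This is the quantitative ingredient that converts ``$\overline\lambda$ close to $\lambda$'' into ``approximate optimality,'' which drives the rest of the argument.

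I would then argue by contradiction. If the conclusion fails there are $\epsilon_0>0$, parameters $\overline\lambda_n\to\lambda$, and minimizers $\vx^{(n)}\in S(\lambda)$ with $\mathrm{dist}\big(\vx^{(n)},S(\overline\lambda_n)\big)\ge\epsilon_0$. By compactness of $\Omega$ a subsequence satisfies $\vx^{(n)}\to\vx^\star$, and closedness of $S(\lambda)$ gives $\vx^\star\in S(\lambda)$. From the value bound, $\calL(\vx^\star;\overline\lambda_n,\vy)=m(\lambda)+(\overline\lambda_n-\lambda)\vec{R}(\vx^\star)\to m(\lambda)$ while $m(\overline\lambda_n)\to m(\lambda)$, so $\vx^\star$ is an \emph{asymptotically optimal} point for the perturbed problems. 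The contradiction would follow if I could produce, for large $n$, an \emph{exact} minimizer $\vx_{\overline\lambda_n}\in S(\overline\lambda_n)$ within $\epsilon_0$ of $\vx^\star$.

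The hard part will be exactly this passage from approximate to exact minimizers, i.e. the genuine lower-hemicontinuity step, and it is where joint continuity alone is insufficient: when $\calL(\cdot;\lambda,\vy)$ has several global minimizers, an arbitrarily small change in $\lambda$ can destroy the well containing $\vx^\star$ and move all global minimizers to a distant competing well, so $\mathrm{dist}(\vx^{(n)},S(\overline\lambda_n))$ need not vanish (the minimizer of $\calL(\cdot;\overline\lambda_n,\vy)$ restricted to a small ball $\overline{B}(\vx^\star,\epsilon_0/2)\cap\Omega$ converges to $\vx^\star$, but need not be global on $\Omega$). If instead $\vx^\star$ is the \emph{unique} global minimizer at $\lambda$, the contradiction closes immediately: any $\vw_n\in S(\overline\lambda_n)$ subconverges to some $\vw^\star$ with $\calL(\vw^\star;\lambda,\vy)=\lim m(\overline\lambda_n)=m(\lambda)$, hence $\vw^\star=\vx^\star$ and $\mathrm{dist}(\vx^{(n)},S(\overline\lambda_n))\to 0$. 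I would therefore isolate uniqueness of the global minimizer as the crux, handling the nonunique case either through a nondegeneracy (Morse/implicit-function) assumption on the smooth nonconvex $\vec{R}$ that pins $\vx^\star$ inside its own well, or by restricting to the generic parameters $\lambda$ at which $S(\lambda)$ is a singleton. Establishing this uniqueness or nondegeneracy is the step I expect to be decisive; the compactness and value-continuity arguments are routine.
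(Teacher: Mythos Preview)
Your setup matches the paper's closely: both use compactness of $\Omega$, uniform boundedness of $\vec{R}$ on $\Omega$ (the paper writes $\vec{R}(\vx)<C_3$), and the resulting Lipschitz estimate $|\calL(\vx;\lambda_1,\vy)-\calL(\vx;\lambda_2,\vy)|\le C_3|\lambda_1-\lambda_2|$, which is exactly your bound $|m(\overline\lambda)-m(\lambda)|\le R_{\max}|\overline\lambda-\lambda|$. Your treatment of the unique-minimizer case also coincides with the paper's case~(1).

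Where you stop --- the passage from ``approximate'' to ``exact'' minimizers when $S(\lambda)$ is not a singleton --- the paper does not introduce any nondegeneracy hypothesis. Instead it splits into the cases $|S(\lambda)|=1$, $1<|S(\lambda)|<\infty$, $|S(\lambda)|=\infty$, and in the multi-minimizer cases shows that for $|\overline\lambda-\lambda|<\delta$ one has
\[
\min_{\vx\in\Omega\setminus \mathcal{B}(S(\lambda),\epsilon)}\calL(\vx;\overline\lambda,\vy)\;>\;\min_{\vx\in \mathcal{B}(S(\lambda),\epsilon)}\calL(\vx;\overline\lambda,\vy),
\]
so that every minimizer at $\overline\lambda$ lies in the $\epsilon$-neighborhood of $S(\lambda)$. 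The paper then restates this as ``for any $\vx_A\in S(\lambda)$ there exists $\vx_{\overline\lambda}\in S(\overline\lambda)$ with $\|\vx_A-\vx_{\overline\lambda}\|_2<\epsilon$.'' That restatement is the \emph{opposite} inclusion: what has been proved is $S(\overline\lambda)\subset\mathcal{B}(S(\lambda),\epsilon)$ (upper hemicontinuity), whereas the theorem asserts $S(\lambda)\subset\mathcal{B}(S(\overline\lambda),\epsilon)$ (lower hemicontinuity). Your double-well concern --- that a small change in $\lambda$ can empty one well of global minimizers while keeping the other --- is precisely the obstruction, and nothing in the paper's argument rules it out. So the gap you isolate is real, and the paper's proof does not close it; it simply conflates the two directions in the final sentence of cases~(2) and~(3). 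Your instinct to look for an additional structural assumption (uniqueness, or nondegeneracy of the competing wells) is the honest way forward.
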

\begin{proof}
First, due to the uniform continuity of $\vec{F}$ and $\vec{R}$ on the closed region $\Omega$, there exists constants $C_1, C_2, C_3>0$ that do not depend on $\vx$ such that 
\begin{align*}
\begin{aligned}
& \left|\norm{\vec{F}\vx_1-\vy}_2^2/2-\norm{\vec{F}\vx_2-\vy}_2^2/2\right|<C_1 \norm{\vx_1-\vx_2}_2\\
& \left|\vec{R}(\vx_1)-\vec{R}(\vx_2)\right|<C_2 \norm{\vx_1-\vx_2}_2,\\
& \vec{R}(\vx)<C_3.
\end{aligned}
\end{align*}
Then for any $\vx_1, \vx_2\in \Omega$ and $\lambda_1$ and $\lambda_2$, $\calL(\vx;\lambda,\vy)$ satisfy
\begin{equation}
    \label{eq:L-continuous}
    \begin{aligned}
    & \left| \calL(\vx_1;\lambda) - \calL(\vx_2;\lambda)\right|\leq (C_1 + C_2 \lambda) \norm{\vx_1-\vx_2}_2\\
    & \left| \calL(\vx;\lambda_1)-\calL(\vx;\lambda_2)\right| \leq C_3 |\lambda_1-\lambda_2|.
    \end{aligned}
\end{equation}
For any $\lambda>0$, there are three cases of the number of the minimizers of $\calL(\vx;\lambda,\vy)$ in $\Omega$: unique, finite, or infinite. These cases are discussed respectively in the following.
\begin{enumerate}
    \item[(1)]When the minimizer $\vx_{\lambda}=\argmin_{\vx\in \Omega}\calL(\vx;\lambda,\vy)$ is unique, for any $\epsilon>0$, there exists $\delta_0>0$ such that for all $\norm{\vx-\vx_{\lambda}}_2\geq \epsilon$,
    \begin{align*}
    \calL(\vx;\lambda)\geq\calL(\vx_\lambda;\lambda)+\delta_0.
    \end{align*}
    Let $\mathcal{B}(\vx,\epsilon)$ denotes the region $\{\vx_0\in \Omega:\norm{\vx_0-\vx}<\epsilon\}$. Actually, due to that $\Omega/\mathcal{B}(\vx_\lambda,\epsilon)$ is closed, $\min_{\vx\in\Omega/\mathcal{B}(\vx_\lambda,\epsilon)}\calL(\vx;\lambda) > \min_{\vx\in \Omega}\calL(\vx; \lambda) = \calL(\vx_\lambda;\lambda)$. Then any $\delta_0$ that satisfies $0<\delta_0 \leq \min_{\vx\in\Omega/\mathcal{B}(\vx_\lambda,\epsilon)}\calL(\vx;\lambda)-\calL(\vx_\lambda;\lambda)$ fulfill the requirements. 
    
    Let $\delta=\frac{\delta_0}{2C_3}$. Then for any $\overline{\lambda}$ that satisfies $\lambda-\delta<\overline{\lambda}<\lambda$, for any $\vx\in \Omega/\mathcal{B}(\vx_\lambda,\epsilon)$, it is obtained that
    \begin{equation*}
        \begin{aligned}
        \calL(\vx;\overline{\lambda})&\geq \calL(\vx;\lambda)-C_3(\lambda-\overline{\lambda})\\
   &   \geq \calL(\vx_\lambda;\lambda)+\delta_0 -\delta_0/2\\
  & \geq \calL(\vx_\lambda;\overline{\lambda})+\delta_0/2,
        \end{aligned}
    \end{equation*}
    where the monotonicity of minimization $\calL(\vx_;\lambda)$ for fixed $\vx$ with respect to $\lambda$ is used. 
    
    If $\lambda<\overline{\lambda}<\lambda+\delta$, following the same way, for all $\vx\in \Omega/\mathcal{B}(\vx_\lambda,\epsilon)$, it is obtained
     \begin{equation*}
        \begin{aligned}
        \calL(\vx;\overline{\lambda}) & \geq \calL(\vx;\lambda)\\
        & \geq \calL(\vx_\lambda;\lambda)+\delta_0 \\
  & \geq \calL(\vx_\lambda;\overline{\lambda})-C_3(\overline{\lambda}-\lambda)+\delta_0,\\
  &\geq \calL(\vx_\lambda;\overline{\lambda}) + \delta_0/2.
        \end{aligned}
    \end{equation*}
Anyway, when $|\overline{\lambda}-\lambda|<\delta$, for any $\vx\in \Omega/\mathcal{B}(\vx_\lambda;\epsilon)$, $\calL(\vx;\overline{\lambda})\geq \calL(\vx_\lambda;\overline{\lambda})+\delta_0/2$. Therefore, 
\begin{align*}
\min_{\vx\in \Omega}\calL(\vx;\overline{\lambda})\leq \min_{\vx\in \mathcal{B}(\vx_\lambda,\epsilon)}\calL(\vx;\overline{\lambda}) \leq \calL(\vx_\lambda;\overline{\lambda})< \min_{\vx\in \Omega/\mathcal{B}(\vx_\lambda, \epsilon)}\calL(\vx;\overline{\lambda}),
\end{align*}
which implies that the minimizer of $\calL(\vx;\overline{\lambda})$ is in $\mathcal{B}(\vx_\lambda,\epsilon)$, that is, $\vx_{\overline{\lambda}} \in \mathcal{B}(\vx_\lambda,\epsilon)$.

\item[(2)] If there are finite minimizers of $\calL(\vx;\lambda)$ that are denoted by $\vx_\lambda^i\ (i=1, 2, \dots, N)$, and the minimization is denoted by $\tilde{L}$, for any $\epsilon>0$, there exist $\delta_0$ that satisfies
\begin{align*}
0<\delta_0<\min_{\vx\in \Omega/( \cup_i\mathcal{B}(\vx_\lambda^i, \epsilon))}\calL(\vx;\lambda)-\tilde{L}
\end{align*}
such that for any $\vx\in \Omega/( \cup_i\mathcal{B}(\vx_\lambda^i,\epsilon))$, 
\begin{align*}
\calL(\vx;\lambda)\geq \tilde{L}+\delta_0.
\end{align*}
Similarly, it can be proved that there exists $\delta$ such that for any $|\lambda-\overline{\lambda}|<\delta$, the minimizers of $\calL(\vx;\overline{\lambda})$ satisfy $\vx_{\overline{\lambda}}\in \cup_i \mathcal{B}(\vx_\lambda^i, \epsilon)$.

\item[(3)] If there exist infinite minimizers of $\calL(\vx;\lambda)$, let the set of these minimizers is denoted by $A$ and the minimization is denoted by $\tilde{L}:=\calL(\vx;\lambda)\ (\vx\in A)$. Then due to the continuity of $\calL(\vx;\lambda)$ with respect to $\vx$, $A$ is closed. Define set $\mathcal{B}(A,\epsilon):=\{\vx\in \Omega:\norm{\vx_0-\vx}_2<\epsilon, \text{ for any }\vx_0\in A\}$. Then, since $\Omega/\mathcal{B}(A,\epsilon)$ is closed, $\min_{\vx\in \Omega/\mathcal{B}(A,\epsilon)} - \tilde{L}>0$ holds. And then for any $\epsilon>0$, there exists a positive $\delta_0$ that satisfies that 
\begin{align*}
\delta_0<\min_{\vx\in \Omega/\mathcal{B}(A,\epsilon)} \calL(\vx;\lambda) - \tilde{L}
\end{align*}
such that for all $\vx\in\Omega/\mathcal{B}(A,\epsilon)$, $\calL(\vx;\lambda)\geq \tilde{L}+\delta_0$. Similarly, it can be proved that there exists positive $\delta\leq\frac{\delta_0}{2C_3}$ such that for any $\overline{\lambda}$, if $|\overline{\lambda}-\lambda|<\delta$,  
\begin{align*}
\calL(\vx;\overline{\lambda})\geq\calL(\vx_A;\overline{\lambda})+\delta_0/2
\end{align*} 
holds for any $\vx\in \Omega/\mathcal{B}(A, \epsilon)$ and $\vx_A\in A$. It follows that
\begin{align*}
\min_{\vx\in \Omega}\calL(\vx, \overline{\lambda})\leq \min_{\vx\in \mathcal{B}(A,\epsilon)}\calL(\vx;\overline{\lambda}) \leq \min_{\vx_A\in \mathcal{B}(A, \epsilon)} \calL(\vx_A;\overline{\lambda}) < \min_{\vx\in \Omega/\mathcal{B}(A, \epsilon)} \calL(\vx;\overline{\lambda}).
\end{align*}
Therefore, the minimizers of $\calL(\vx;\overline{\lambda})$ lie in $\mathcal{B}(A,\epsilon)$, that is, for any $\vx_A\in A$, there exists a $\vx_{\overline{\lambda}}\in \argmin_{\vx\Omega}\calL(\vx;\overline{\lambda})$ such that $\norm{\vx_A-\vx_{\overline{\lambda}}}_2<\epsilon$.
\end{enumerate}
\end{proof}

\begin{theorem}
\label{the:exist-rx-lambda}
For any target image $\vx^*\in \Omega$ and noisy observation $\vy_\vec{\eta}=\vec{F}\vx^*+\vec{\eta}$, let Assumption 
\ref{as:noise-assumption} hold, then there exists a minimizer $\vx_\lambda^\vec{\eta}$ of $\calL(\vx;\lambda,\vy_\vec{\eta})$ in $\Omega$ with $\lambda>0$ such that $\vec{R}(\vx_\lambda^\vec{\eta})=\vec{R}(\vx^*)$.
\end{theorem}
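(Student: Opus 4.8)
The plan is to prove the statement by an intermediate value argument applied to the regularization level realized at the minimizers, combining three earlier results: the bracketing from Corollary~\ref{cor:exist-rx}, the monotonicity of Theorem~\ref{the:monotonicity-of-R}, and the continuity of minimizers in $\lambda$ from Theorem~\ref{the:continuous-lambda}. Concretely, Corollary~\ref{cor:exist-rx} supplies parameters $\overline{\lambda}\le\underline{\lambda}$ and minimizers with $\vec{R}(\vx_{\overline{\lambda}}^{\vec{\eta}})\ge \vec{R}(\vx^*)$ and $\vec{R}(\vx_{\underline{\lambda}}^{\vec{\eta}})\le \vec{R}(\vx^*)$, so the target value $\vec{R}(\vx^*)$ is sandwiched between the regularization levels attained at $\overline{\lambda}$ and $\underline{\lambda}$; the goal is to exhibit a single $\lambda\in[\overline{\lambda},\underline{\lambda}]$ at which some minimizer hits $\vec{R}(\vx^*)$ exactly.

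The main obstacle is that the minimizer of $\calL(\cdot;\lambda,\vy_{\vec{\eta}})$ over $\Omega$ need not be unique, so $\lambda\mapsto \vec{R}(\vx_\lambda^{\vec{\eta}})$ is a priori set-valued and could in principle jump across $\vec{R}(\vx^*)$ without ever attaining it. To get around this I would work with the single-valued selection $r_{+}(\lambda):=\max\{\vec{R}(\vx):\vx\in\argmin_{\vx\in\Omega}\calL(\vx;\lambda,\vy_{\vec{\eta}})\}$, the maximum being attained because the argmin set is a closed, hence compact, subset of the compact box $\Omega$ and $\vec{R}$ is continuous. The crux is then to show that $r_{+}$ is continuous on $(0,\infty)$. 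Upper semicontinuity would follow from closedness of the graph of the argmin multifunction: by the joint Lipschitz continuity of $\calL$ in $(\vx,\lambda)$ recorded in \eqref{eq:L-continuous}, the value function $\lambda\mapsto\min_{\vx\in\Omega}\calL(\vx;\lambda,\vy_{\vec{\eta}})$ is continuous, so any limit of minimizers taken along $\lambda_n\to\lambda$ is again a minimizer, which forces $\limsup r_{+}(\lambda_n)\le r_{+}(\lambda)$. Lower semicontinuity is exactly where Theorem~\ref{the:continuous-lambda} enters: it guarantees that the maximizing minimizer at $\lambda$ is approximated by minimizers at nearby $\overline{\lambda}$, and the uniform continuity of $\vec{R}$ on $\Omega$ then yields $r_{+}(\overline{\lambda})\ge r_{+}(\lambda)-o(1)$. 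I expect reconciling these two one-sided estimates to be the technically delicate step.

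Next I would record the monotonicity. Inequality \eqref{eq:3.7} in the proof of Theorem~\ref{the:monotonicity-of-R} holds for every choice of minimizers, so for $\lambda_1<\lambda_2$ one has $\vec{R}(\vx_{\lambda_1})\ge \vec{R}(\vx_{\lambda_2})$ for all minimizers, whence $r_{+}$ is nonincreasing. With $r_{+}$ continuous, nonincreasing, and satisfying $r_{+}(\overline{\lambda})\ge \vec{R}(\vx^*)$, the intermediate value theorem produces some $\lambda^*\ge\overline{\lambda}$ with $r_{+}(\lambda^*)=\vec{R}(\vx^*)$ as soon as $r_{+}$ reaches or drops below $\vec{R}(\vx^*)$ somewhere; the maximizing minimizer at $\lambda^*$ is then the desired $\vx_{\lambda^*}^{\vec{\eta}}$.

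The only scenario not covered by this IVT is $r_{+}(\lambda)>\vec{R}(\vx^*)$ for all $\lambda$. Since $r_{+}(\lambda)\to\min_{\vx\in\Omega}\vec{R}(\vx)$ as $\lambda\to\infty$ by Theorem~\ref{the:monotonicity-of-R}(2) together with the ordering of minimizer levels, this forces $\min_{\vx}\vec{R}(\vx)=\vec{R}(\vx^*)$, i.e. $\vx^*$ is a global minimizer of $\vec{R}$. In that degenerate case the bracketing inequality $\vec{R}(\vx_{\underline{\lambda}}^{\vec{\eta}})\le \vec{R}(\vx^*)=\min\vec{R}\le \vec{R}(\vx_{\underline{\lambda}}^{\vec{\eta}})$ collapses to an equality at $\underline{\lambda}$, which closes the argument. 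I expect the semicontinuity verification for $r_{+}$ to be the only substantial work, with the monotonicity and the final intermediate value step being routine.
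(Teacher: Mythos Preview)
Your proposal is correct and rests on exactly the same three ingredients as the paper's proof: the bracketing from Corollary~\ref{cor:exist-rx}, the monotonicity of Theorem~\ref{the:monotonicity-of-R}, and the stability of minimizers in $\lambda$ from Theorem~\ref{the:continuous-lambda}. The only difference is in packaging. You build a single-valued continuous selection $r_{+}(\lambda)=\max\{\vec{R}(\vx):\vx\in\argmin_{\Omega}\calL(\cdot;\lambda,\vy_{\vec{\eta}})\}$, verify its continuity via upper and lower semicontinuity, and then invoke the classical intermediate value theorem, handling the boundary case $\vec{R}(\vx^*)=\min_{\Omega}\vec{R}$ separately. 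The paper instead defines
\[
l=\sup\{\lambda:\vec{R}(\vx_\lambda)>\vec{R}(\vx^*)\text{ for all minimizers}\},\qquad
u=\inf\{\lambda:\vec{R}(\vx_\lambda)<\vec{R}(\vx^*)\text{ for all minimizers}\},
\]
and splits into two cases: if $l<u$, then for any $\lambda\in(l,u)$ the monotonicity forces $\vec{R}(\vx_\lambda)=\vec{R}(\vx^*)$ for every minimizer; if $l=u$, Theorem~\ref{the:continuous-lambda} together with uniform continuity of $\vec{R}$ on $\Omega$ shows the value is hit at $\lambda=l$. Your route is slightly longer because you prove full continuity of $r_{+}$, while the paper's $l$/$u$ case split avoids that explicit step; on the other hand, your argument is the more standard real-analysis template and makes the role of each hypothesis more transparent.
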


\begin{proof}
According to Corollary \ref{cor:exist-rx}, there exist $\lambda_1$ and $\lambda_2$ such that any $\vx_1\in \argmin_{\vx\in \Omega}\calL(\vx;\lambda_1,\vy_\vec{\eta})$ and $\vx_2\in \argmin_{\vx\in \Omega}\calL(\vx;\lambda_2,\vy_\vec{\eta})$ satisfy
\begin{align*}
\vec{R}(\vx_1)\leq \vec{R}(\vx^*),\, \vec{R}(\vx_2)\geq \vec{R}(\vx^*).
\end{align*}
Define followings
\begin{align*}
\begin{aligned}
& l=\sup\{\lambda:\vec{R}(\vx_\lambda)> \vec{R}(\vx^*) \text{ for all } \vx_\lambda \in \argmin_{\vx\in \Omega}\calL(\vx;\lambda,\vy_{\vec{\eta}})\},\\
& u=\inf\{\lambda:\vec{R}(\vx_\lambda)< \vec{R}(\vx^*) \text{ for all }\vx_\lambda \in \argmin_{\vx\in \Omega}\calL(\vx;\lambda,\vy_{\vec{\eta}})\}.\\
\end{aligned}
\end{align*}
We can obtain $l\leq u$. Two cases are discussed separately.
\begin{enumerate}
\item[(1)]$l<u$.

Then for any $\lambda\in (l,u)$, according to Theorem \ref{the:monotonicity-of-R}, $\vec{R}(\vx_\lambda)\leq \vec{R}(\vx^*)$ and $\vec{R}(\vx_\lambda)\geq \vec{R}(\vx^*)$ hold for all minimizers $\vx_{\lambda}$ of $\calL(\vx; \lambda)$. Therefore, $\vec{R}(\vx_\lambda)=\vec{R}(\vx^*)$ holds for any $\lambda\in (l,u)$.
\item[(2)] $l=u$.

From the uniformly continuous of $\vec{R}(\vx)$ in $\Omega$, for any $\epsilon>0$, there exists $\delta$ such that when $\norm{\vx_1-\vx_2}_2< \delta$, $|\vec{R}(\vx_1)-\vec{R}(\vx_2)|<\epsilon$ holds. If $l=u$, according to Theorem \ref{the:continuous-lambda}, there exist $\delta_0$, such that for any $\overline{l}<l=u<\overline{u}$ that satisfies $|\overline{u}-\overline{l}|<\delta_0$, there exist $\vx_{\overline{l}}\in \argmin_{\vx\in \Omega}\calL(\vx;\overline{l},\vy_\vec{\eta})$ and $\vx_{\overline{u}}\in \argmin_{\vx\in \Omega}\calL(\vx;\overline{u},\vy_\vec{\eta})$ that satisfy 
\begin{align*}
\norm{\vx_{\overline{l}}-\vx_{\overline{u}}}_2<\delta.
\end{align*}
And then $|\vec{R}(\vx_{\overline{l}})-\vec{R}(\vx_{\overline{u}})|<\epsilon$. Moreover, due to the monotonicity of $\vec{R}(\vx_\lambda)$ with respect to $\lambda$, any minimizer $\vx_l\in \argmin_{\vx\in \Omega}\calL(\vx;l,\vy_\vec{\eta})$ satisfies that
\begin{align*}
 \vec{R}(\vx_{\overline{u}})\leq \vec{R}(\vx_l)\leq \vec{R}(\vx_{\overline{l}}).
\end{align*}
And combining $\vec{R}(\vx_{\overline{u}})< \vec{R}(\vx^*)<\vec{R}(\vx_{\overline{l}})$, we can obtain that  
\begin{align*}
|\vec{R}(\vx_l)-\vec{R}(\vx^*)| <  |\vec{R}(\vx_{\overline{u}})-\vec{R}(\vx_{\overline{l}})|<\epsilon.
\end{align*}
Therefore, $\vec{R}(\vx_l)=\vec{R}(\vx^*)$.
\end{enumerate}

\end{proof}

In reality, the true $\pi_{\theta_\star}(\vx^*)$ is unknown, so we can use training result $\frac{1}{|\calD|}\sum_{\vx\in \calD}\pi_{\theta_\star}(\vx)$ as an approximation of $\pi_{\theta_\star}(\vx^*)$. According to the proof of Theorem \ref{the:exist-rx-lambda}, the closed interval method can be used to find a $\lambda$ that makes the regularization term $\vec{R}(\vx_{\lambda})$ approximate $\frac{1}{|\mathcal{D}|}\sum_{\vx\in \mathcal{D}}-\log \pi_{\theta_\star}(\vx^*)$.

\section{Numerical implementation}
\label{sec:4}

In this section, to implement the rule \eqref{eq:choice-lambda}, we present an incremental gradient descent algorithm with adaptive regularization parameters to address the estimation of $\vx$ from the minimization of \eqref{eq:obj-glow}. Moreover, a patch-based method is applied to address the reconstruction of larger-size images than training data, see \ref{sec:4.2}.

\subsection{Incremental gradient descent algorithm}
\label{sec:4.1}
As we discussed previously, we use 
\begin{equation}
    \label{eq:prior-ref}
    C_{\mathcal{D}, \theta^\star} := \frac{1}{|\mathcal{D}|}\sum_{\vx\in \mathcal{D}}-\log \pi_{\theta_\star}(\vx^*)
\end{equation}
in the rule \eqref{eq:choice-lambda} instead of unknown $\vec{R}(\vx^*)$ in practice. 
For fixed $\lambda$, the optimization problem \eqref{eq:3.4} can be solved by the incremental gradient descent method, which alternately updates the data fidelity and regularization terms at each iteration~\cite{bertsekas1997nonlinear}. In addition, combining the closed interval method to update $\lambda$, an algorithm to minimize \eqref{eq:3.4} with adaptive regularization parameter is presented in \ref{alg:1}.

\begin{algorithm}[!htb]
\caption{Incremental gradient descent with adaptive regularization \label{alg:1}}
\begin{algorithmic}[1]
  \STATE Given fixed \Glow model with the parameters $\theta_\star$ and the prior $C_{\mathcal{D},\theta^\star}$, initial lower and upper bound of $\lambda$: $l_0$ and $u_0$, $\lambda_0 = l$, initial guess $\vx^0$, tolerance $\epsilon_1$ and $\epsilon_2$, rate $\beta\in (0,1)$.
  \FOR{$i = 0, 1, \dots$}
  \STATE $\vz_0 = \vx^i$
  \FOR {$j = 1, 2, \dots$}
  \STATE
  $\tilde{\vz} = \vz_{j-1}-s\nabla(\frac{1}{2}\norm{\vec{F}\vz_{j-1}-\vy_\vec{\eta}}_2^2)$\\
  $\vz_{j} = \tilde{\vz}-t \lambda_i\nabla \vec{R}(\tilde{\vz})$
  \ENDFOR
  \STATE $\vx^{i+1}=\vz_j$
  \STATE Stop if one of these conditions are met: (i) $|\vec{R}(\vx^{i+1})-C|\leq\epsilon_1$; (ii) $|u_i-l_i|\leq\epsilon_2$;\\
  Otherwise, execute one of the following steps,
    \begin{enumerate}
      \item[(i)] if $\vec{R}(\vx^{i+1})<C$, let $l_{i+1}=l_i$, $u_{i+1}=\lambda_i$, and $\lambda_{i+1}=\lambda_i-\beta |u_{i+1}-l_{i+1}|$;
      \item[(ii)] if $\vec{R}(\vx^{i+1})>C$, let $l_{i+1}=\lambda_i$, $u_{i+1}=u_i$, and $\lambda_{i+1}=\lambda_i+\beta |u_{i+1}-l_{i+1}|$.
  \end{enumerate}
 \ENDFOR
\end{algorithmic}
\end{algorithm}

\begin{remark}
In algorithm \ref{alg:1}, it is assumed that $l_0$ and $u_0$ are lower and upper bound of $\lambda$, which means that the reconstruction $\vx_{l_0}^\vec{\eta}$ and $\vx_{u_0}^\vec{\eta}$ satisfies 
\begin{equation}
    \label{eq:l0-u0}
    \vec{R}(\vx_{l_0})\geq C_{\mathcal{D}, \theta^\star} \geq \vec{R}(\vx_{u_0}).
\end{equation}
Then from the initial lower and upper bound of $\lambda$, the closed interval between them keeps decreasing, and the interval size satisfies $|u_{i+1}-l_{i+1}|\leq (1-\beta)|u_i-l_i|$ or $|u_{i+1}-l_{i+1}|\leq \beta|u_i-l_i|$. Therefore, after enough iterations, $|u_i-l_i|$ or $|\vec{R}(x^i)-C_{\mathcal{D}, \theta^\star}|$ will meet the tolerance. 

The lower and upper bound of $\lambda$ can be precomputed. Updating $\lambda$ using $\lambda=\beta \lambda \ (\beta>1)$ and comparing the corresponding $\vec{R}(\vx_{\lambda})$ and $C_{\mathcal{D}, \theta^\star}$ can find $\lambda$ such that $\vec{R}(\vx_{\lambda})\leq C_{\mathcal{D}, \theta^\star}$. On the contrast, using $\lambda=\lambda \beta\ (\beta<1)$ can find $\lambda$ such that $\vec{R}(\vx_{\lambda})\geq C_{\mathcal{D}, \theta^\star}$. 

The function called Autograd in deep learning packages, such as Pytorch helps compute the gradient of the regularization term.
\end{remark}

As it is analyzed in Section \ref{sec:PAT.IP}, to reconstruct the target image $\vx^*$, if the Gaussian noise level $\norm{\vec{\eta}}_2$ is known, the regularization parameter should be $\lambda=\norm{\vec{\eta}}_2$. However, when the noise level is unknown, a parameter $\lambda$ such that $\vec{R}(\vx^{\vec{\eta}}_\lambda)$ matches the exact $\vec{R}(\vx^*)$ is an admissible choice. Normally, $C_{\mathcal{D}, \theta^\star}$ that is from training data is used as an approximation of $\vec{R}(\vx^*)$. Therefore, if the exact $\vec{R}(\vx^*)$ is far away from the former, we can relax the expected range of $\vec{R}(\vx_\lambda^{\vec{\eta}})$ such that $\vec{R}(\vx_{\lambda}^{\vec{\eta}})\in (\vec{R}(\vx^*)-a, \vec{R}(\vx^*)+a)$ for some relatively large $a>0$, in which only approximated $\vec{R}(\vx^*)$ is needed, so the tolerance in Algorithm \ref{alg:1} don't need to be small.

\subsection{Patch-based regularization }
\label{sec:4.2}

The cost of training deep learning methods increases significantly with larger image sizes. This is particularly true for medical images, which are often three-dimensional. Therefore, we apply a patched-based regularization method~\cite{Knyaz2019} that only considers randomly selected patches of the same scale as the one used during the training phase. If the size of training images is $N_x\times N_y\times N_z$, to estimate the image of size $M_x\times M_y\times M_z$, the minimization objective is written as 
\begin{equation}
  \label{eq:3.9}
  \calL(\vx;\lambda, \vy_n):=\frac{1}{2}\norm{\vec{F}\vx-\vy_n}_2^2 - \frac{\lambda}{M}\sum_{m=1}^M\log (\pi_{\theta_\star}(\vx[i_m, j_m, k_m;N_x, N_y, N_z]))
\end{equation}
where $\vx[i_m, j_m, k_m;N_x, N_y, N_z]$ denotes the randomly selected three-dimensional submatrix of $\vx$ of scale $N_x\times N_y\times N_z$, that is,
\begin{equation}
  \label{eq:3.10}
  \vx[i_m, j_m, k_m;N_x, N_y, N_z] = \vx(i_m:i_m+N_x-1, j_m:j_m+N_y-1, k_m:k_m+N_z-1).
\end{equation}
For each iteration, $M$ patches are randomly selected with the starting point of index $(i_m,j_m,k_m)$ in the discrete grids.

\section{Simulations}
\label{sec:5}

\paragraph{Dataset and training}
In this section, a lung dataset “ELCAP Public Lung Image Database” which consists of 50 whole-lung \ac{CT} scans~\cite{lungdataset} is used to verify the effectiveness and efficiency of the \ac{NFR}. Training data consists of 2400 images with size $32\times 32\times 32$ selected from the last 40 patients. Validation data are selected from the first 10 patients. Before implementing training and reconstruction, Frangi vesselness filter is applied to obtain vessel-like structures and reduce background noise~\cite{Frangi1998}. To increase the robustness of the network when only a few samples are available, data augmentation is an essential tool to increase the diversity of the dataset by applying random transformations, such as rotation, shift, and elastic deformations. For our three-dimensional images, a Python library TorchIO \cite{perez-garcia_torchio_2021} is applied, in which random affine transform with scale \numrange{0.8}{1.2}, rotation with degree \numrange{0}{90}, flip and shift are considered. A three-layer \Glow model is trained where each layer consists of $7$ steps of flow with maximum hidden channels of $512$. 

\paragraph{Measurement system}

After training the \Glow model, we apply the image prior to the reconstruction. Gaussian noise $\vec{n}$ with level $\sigma$ to the clear data is added as follows,
\begin{equation}
  \label{eq:4.1}
  \vy_n = \vy + \vec{n}, \vec{n}\sim\normal(\mathbf{0}, (\sigma * \max \vy)^2).
\end{equation}

Following clinical specifications of limited-view measurement, transducer arrays in our simulations are positioned on a hemisphere as shown in Figure \ref{fig:system}. To investigate the efficacy of NFR in sparse measurements, we have implemented diverse sparse levels to produce measurements. These levels involve the deployment of transducer arrays uniformly distributed across distinct azimuth and polar angles in a hemisphere. To achieve this, we have implemented varying levels of sparsity, that is to use specific combinations of azimuth and polar angles $(256, 16)$, $(128, 12)$, and $(64, 8)$, respectively. To evaluate the robustness to noise level, 0.01 and 0.05 Gaussian noise have been incorporated into the measurements, as detailed in Equation \eqref{eq:4.1}.
 
\begin{figure}[!htb]
    \centering
    \includegraphics[trim=200 200 200 30,clip,width=0.8\textwidth]{image_2nd/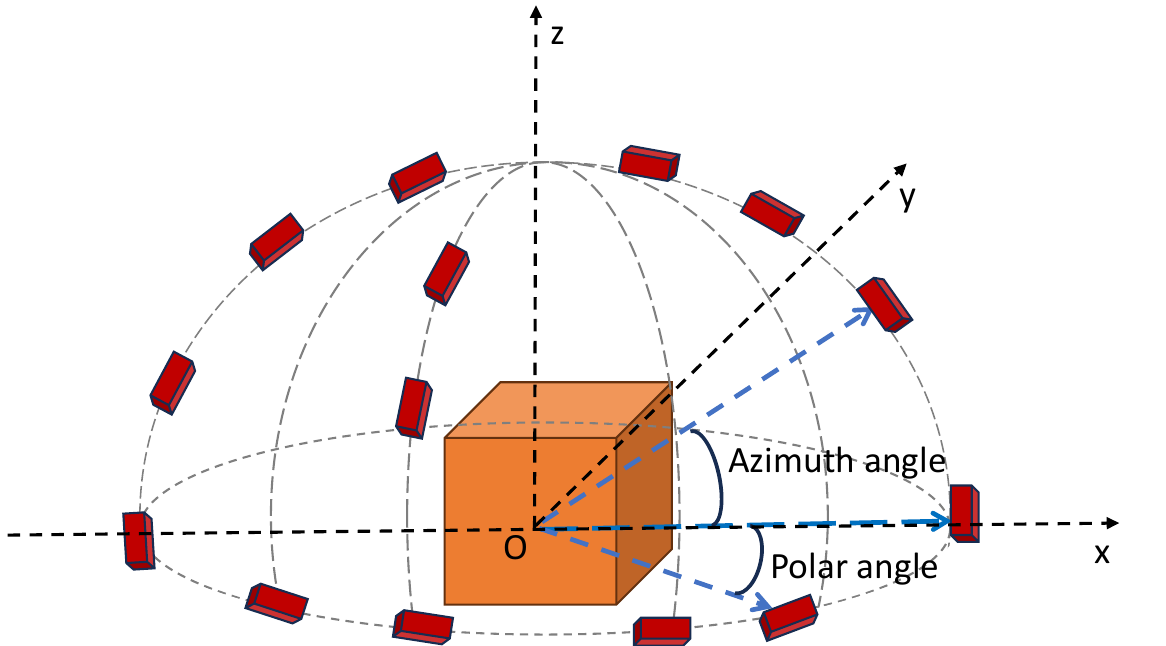}
    \caption{\small{\label{fig:system} Measurement system schematic. The reconstructed object (marked as the orange cube) is placed in the center of the coordinate system. The transducers (marked as red blocks) are placed on a hemispherical surface and arranged according to the azimuth ($0^\circ-360^\circ$) and polar ($0^\circ-90^\circ$) angles.}}
    \label{fig:enter-label}
\end{figure}

\paragraph{Evaluation}

Considering our focus on the contrast of the image, as detailed in~\cite{Hauptmann2018, jin2017deep}, for the reconstruction $\overline{\vx}$, the scaled and unbiased reconstruction is derived from
\begin{equation*}
    (\vx_r, a, b) = \argmin_{\vx,a,b} \norm{\vx - a*\overline{\vx}-b}_2,
\end{equation*}
where $a, b \in \mathbb{R}$.
To assess the quality of image reconstruction, metrics including  Structural Similarity Index (\ac{SSIM})~\cite{ssim}, Peak Signal-to-Noise Ratio (\ac{PSNR}), and relative reconstruction accuracy (\ac{RRA}) of the scaled reconstructions are computed, in which \ac{RRA} is defined as
\begin{equation}
  \label{eq:4.2}
  \text{\ac{RRA}}(\vx_r) = \frac{\norm{\vx_r-\vx^*}_2}{\norm{\vx^*}_2}.
\end{equation}
for ground truth $\vx^*$.

In Section \ref{sec:5.1}, the effect of the regularization parameter on \ac{RRA} and the regularization term is explored with different sparsity levels and noise levels, in which the image to be reconstructed is segmented from the first 10 patients and has the same size with training data. In Section \ref{sec:5.2}, \ac{NFR} method is applied to a larger image of size $128 \times 128 \times 128$ that is segmented from the first 10 patients. Furthermore, a comparative analysis is performed between \ac{NFR} and two state-of-the-art image reconstruction approaches, post-processing \Unet and \ac{TV} regularization, in the case of different sparsity levels and noise levels.

\subsection{Effect of regularization parameter}
\label{sec:5.1}

In this section, \ac{NFR} method is applied to an image selected from the validation data of the same size with training data, $32\times 32\times 32$, to study the effect of the regularization parameter on the reconstruction results. Various sparsity levels, characterized by the number of azimuth and polar angles $(64, 8)$, $(128, 12)$ and $(256, 16)$, and various noise levels of 0.01 and 0.05, are considered. The minimization of the objective function \eqref{eq:obj-glow} is performed for different fixed values of $\lambda$. Only the reconstruction results with the smallest \ac{RRA} from the data with 0.05 noise are shown in Figure \ref{fig:4.1}. Naturally, the more measurements the better results, and all the maximum values of error images are below 0.15. In Figure \ref{fig:4.2}, the \ac{RRA} and regularization term $\vec{R}(\vx)=-\log \pi_{\theta_\star}(\vx)$ are plotted to the regularization parameter under each measurement scenario. Additionally, the reference value $\frac{1}{|\calD|}\sum_{\vx\in \calD}-\log \pi_{\theta_\star}(\vx)=2.34$ that is obtained from training data is plotted as well. As we analyzed in Section \ref{sec:3}, the regularization value $\vec{R}(\vx)$ is monotonically decreasing with respect to $\lambda$. The numerical results align closely with our analysis within the accuracy range. Furthermore, it shows that the $\vx$ that minimizes \ac{RRA} is also the $\vx$ that brings $\vec{R}(\vx)$ closest to our expected reference value of 2.34.

\begin{figure}[!htb]
  \centering
  \begin{minipage}[b]{.24\textwidth}
      \includegraphics[trim=0 50 30 0,clip,width=\textwidth]{image_2nd/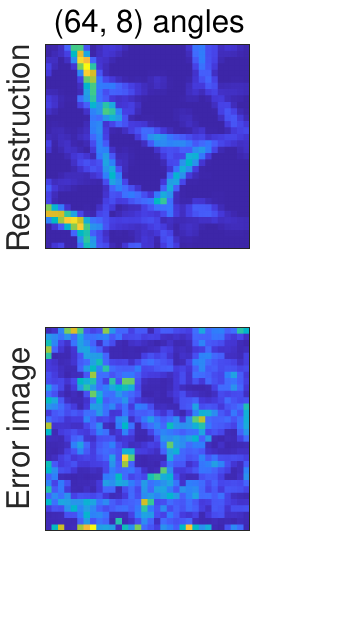}
  \end{minipage}
  \begin{minipage}[b]{.24\textwidth}
      \includegraphics[trim=30 50 30 0,clip,width=.93\textwidth]{image_2nd/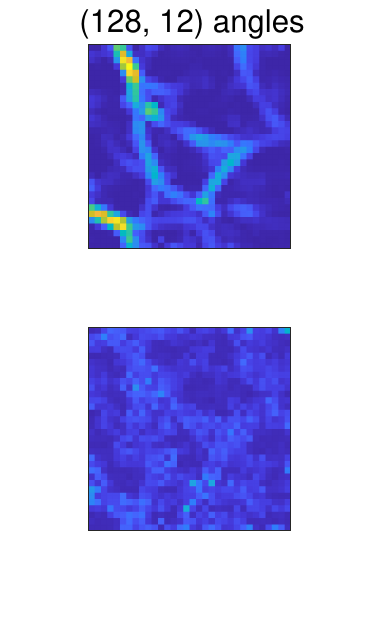}
  \end{minipage}
  \begin{minipage}[b]{.24\textwidth}
      \includegraphics[trim=38 50 0 0,clip,width=1.095\textwidth]{image_2nd/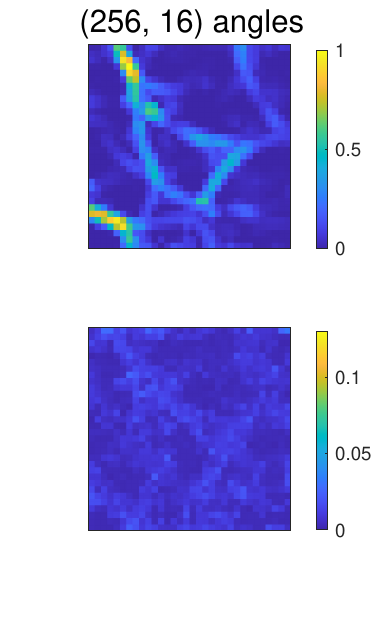}
  \end{minipage}
      \caption{\small{\label{fig:4.1}Maximum projection intensity (MIP) of the scaled reconstruction results $\vx_r$ and error images $\left|\vx_r-\vx^{true}\right|$ of the small phantom with size $32\times 32 \times 32$ from the data with measurements of different sparse levels and 0.05 Gaussian noise.}}
\end{figure}

\begin{figure}[!htb]
    \centering
    \subfloat
    {\raisebox{-0.75in}{\rotatebox[origin=t]{90}{(64, 8) angles}}}  
    \quad
    \subfloat[Noise level 0.01]
    {\includegraphics[trim=0 0 0 0,clip,width=0.4\textwidth]{image_2nd/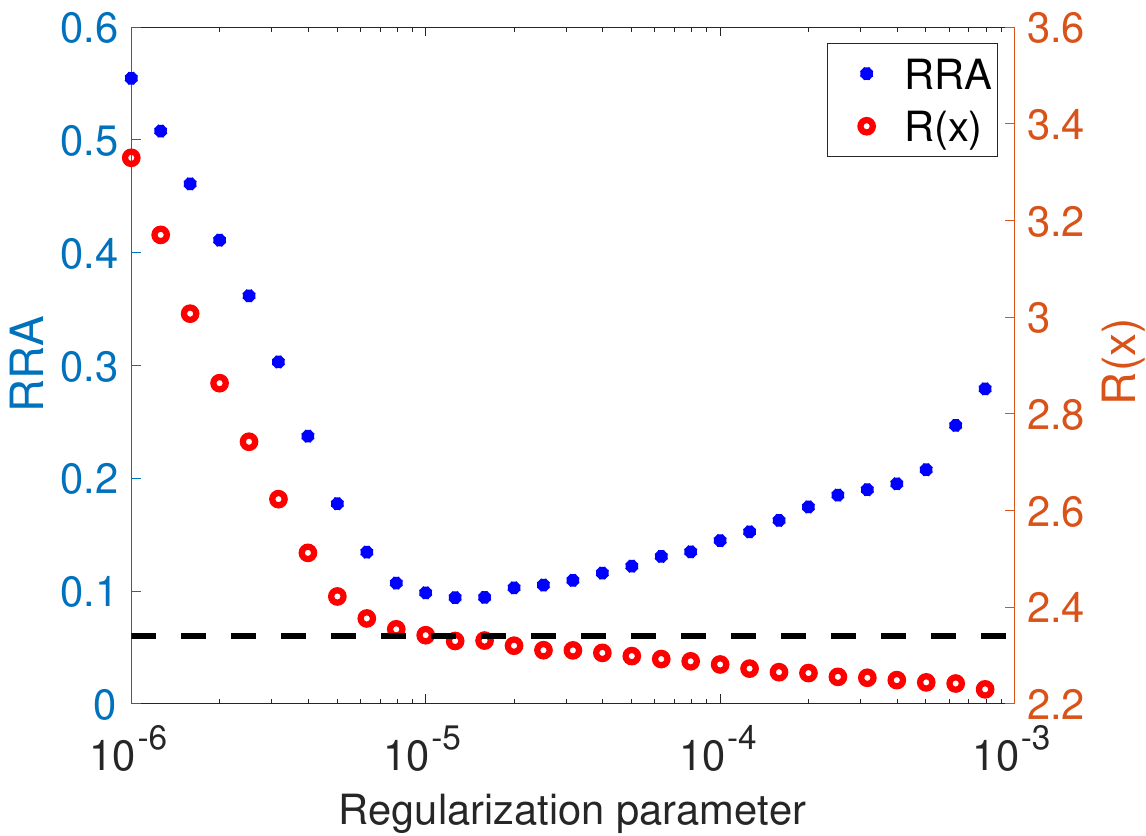}}
    \subfloat[Noise level 0.05]
    {\includegraphics[trim=0 0 0 0,clip,width=0.4\textwidth]{image_2nd/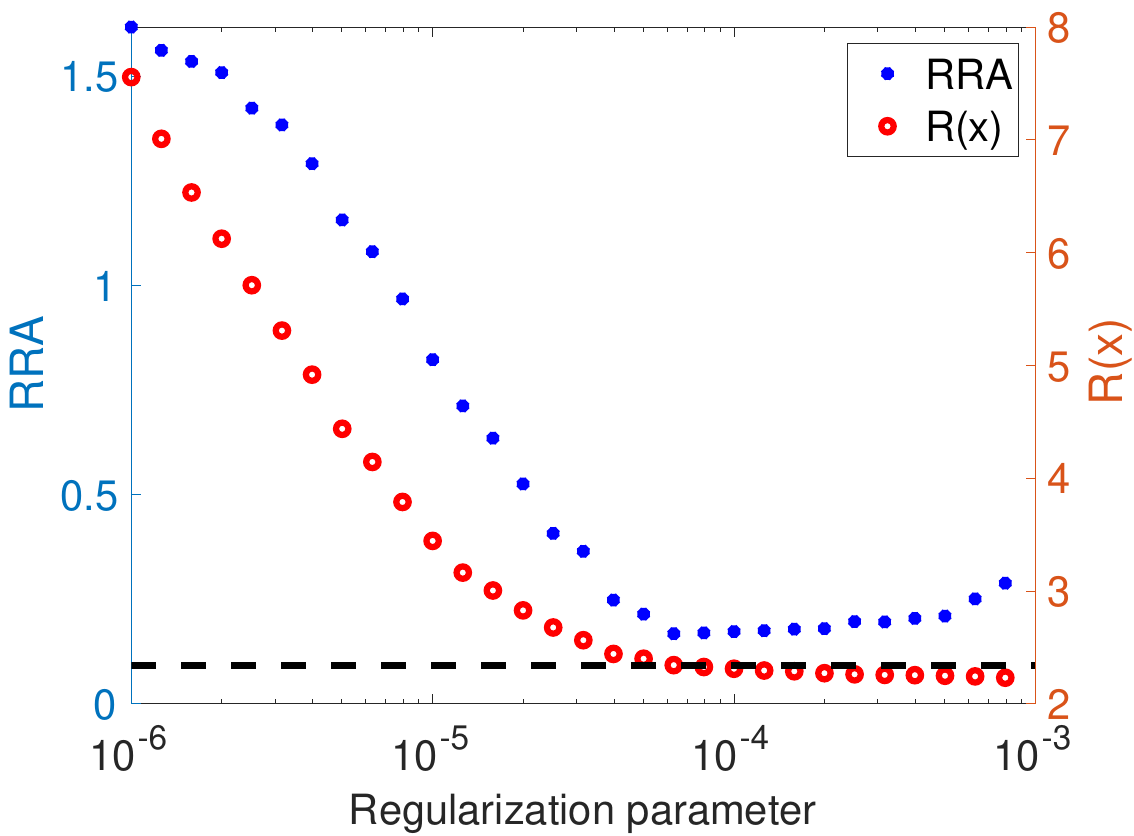}}
    \\
    {\raisebox{-0.75in}{\rotatebox[origin=t]{90}{(128, 12) angles}}}   
    \quad
    \subfloat
    {\includegraphics[trim=0 0 0 0,clip,width=0.4\textwidth]{image_2nd/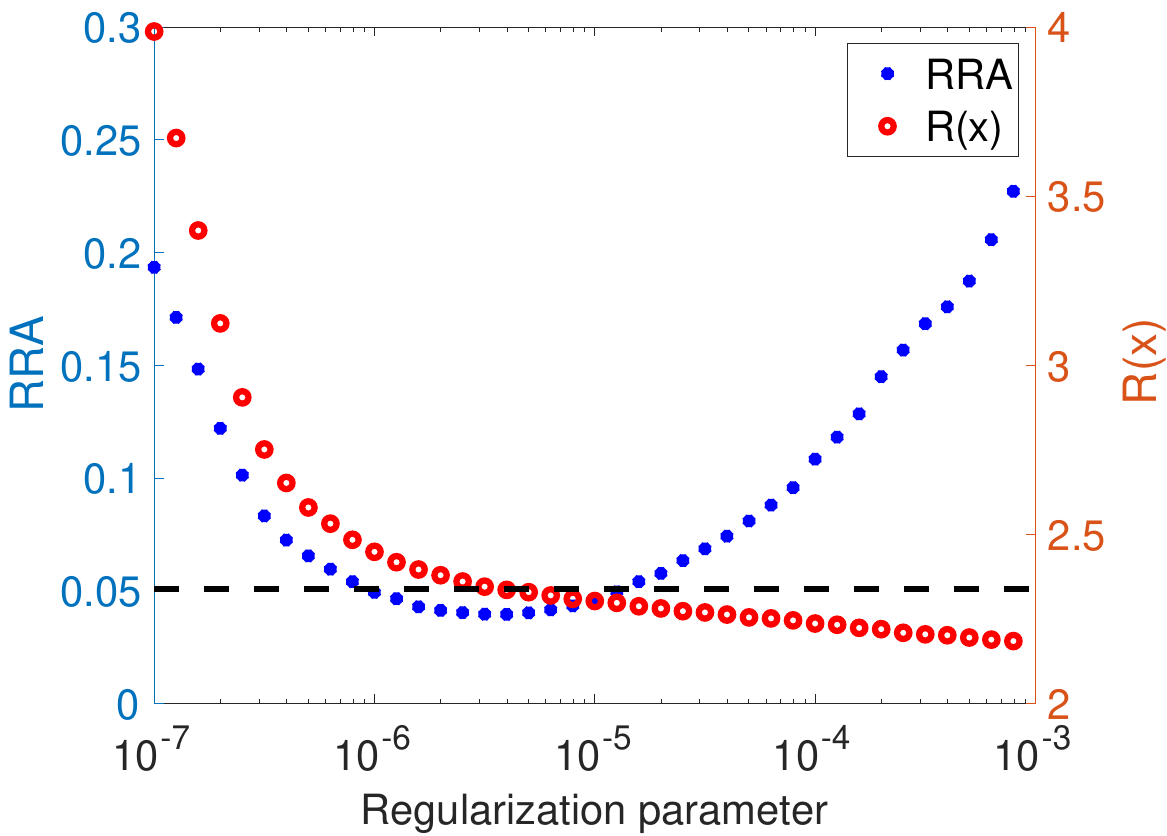}}
    \subfloat
    {\includegraphics[trim=0 0 0 0,clip,width=0.4\textwidth]{image_2nd/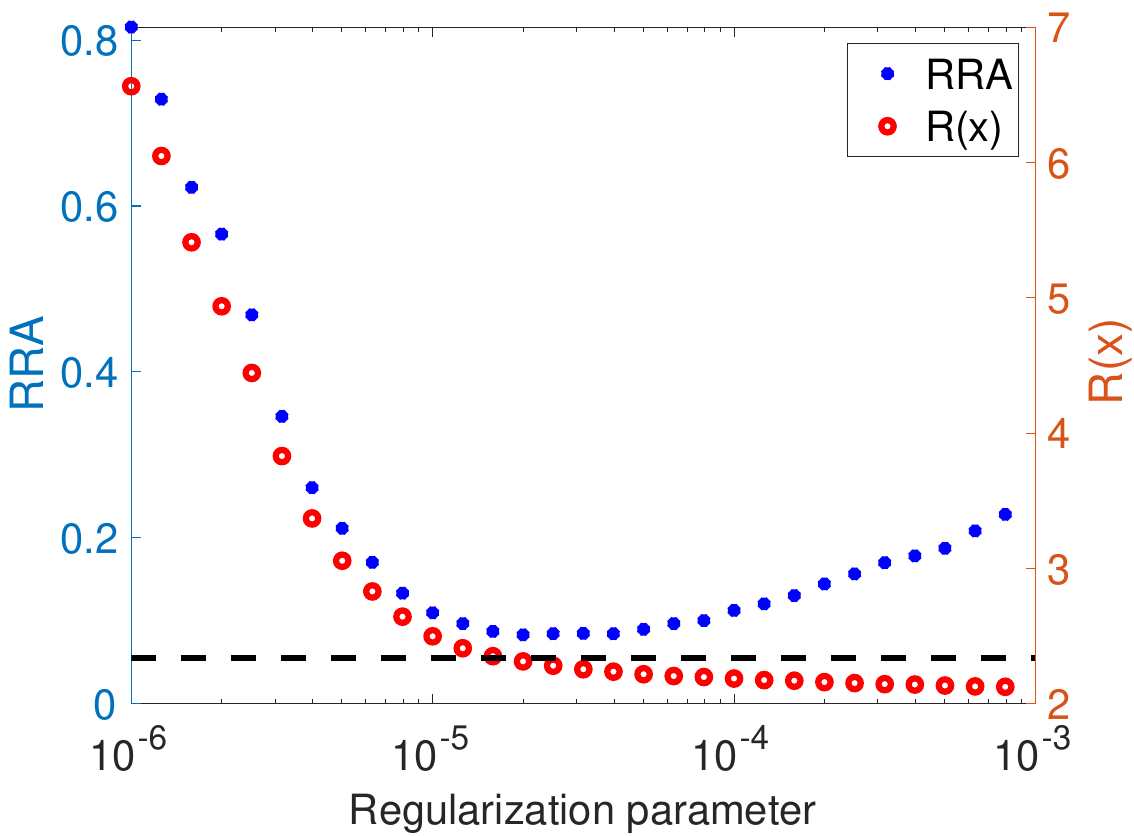}}
    \\
    {\raisebox{-0.75in}{\rotatebox[origin=t]{90}{(256, 16) angles}}}   
    \quad
    \subfloat
    {\includegraphics[trim=0 0 0 0,clip,width=0.4\textwidth]{image_2nd/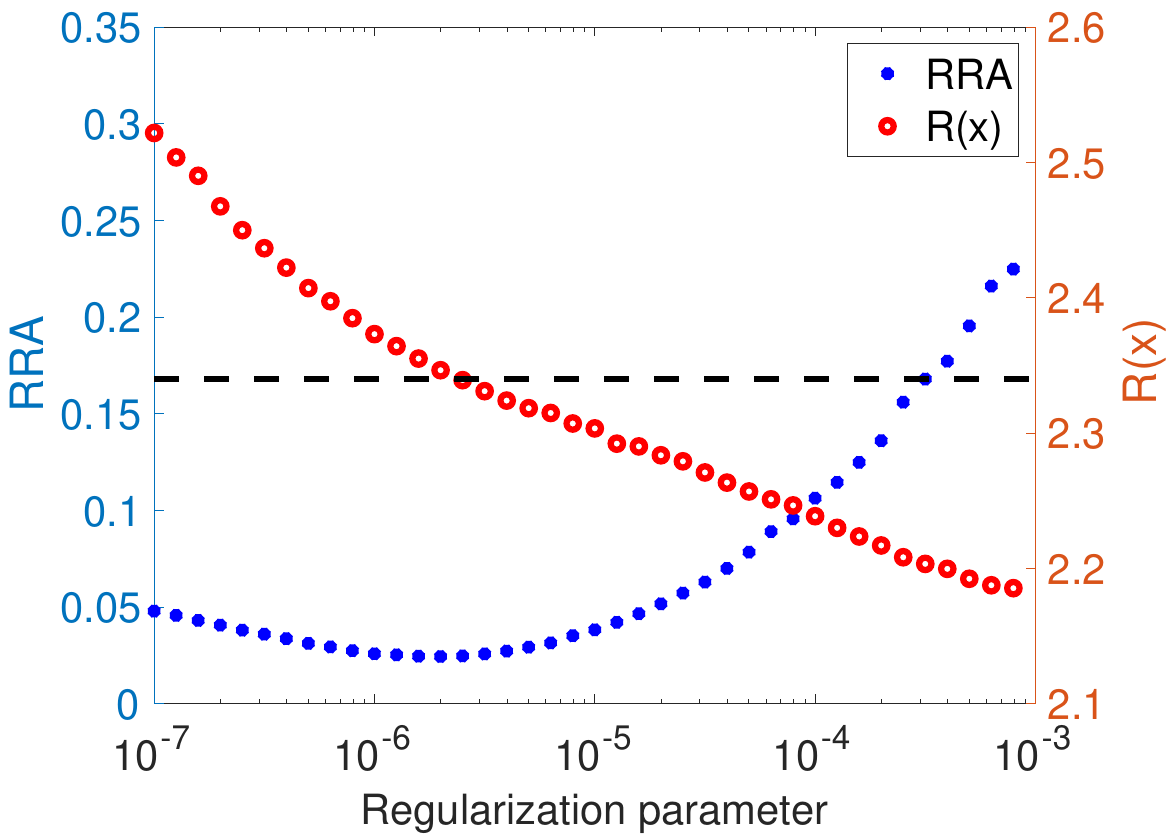}}
    \subfloat
    {\includegraphics[trim=0 0 0 0,clip,width=0.4\textwidth]{image_2nd/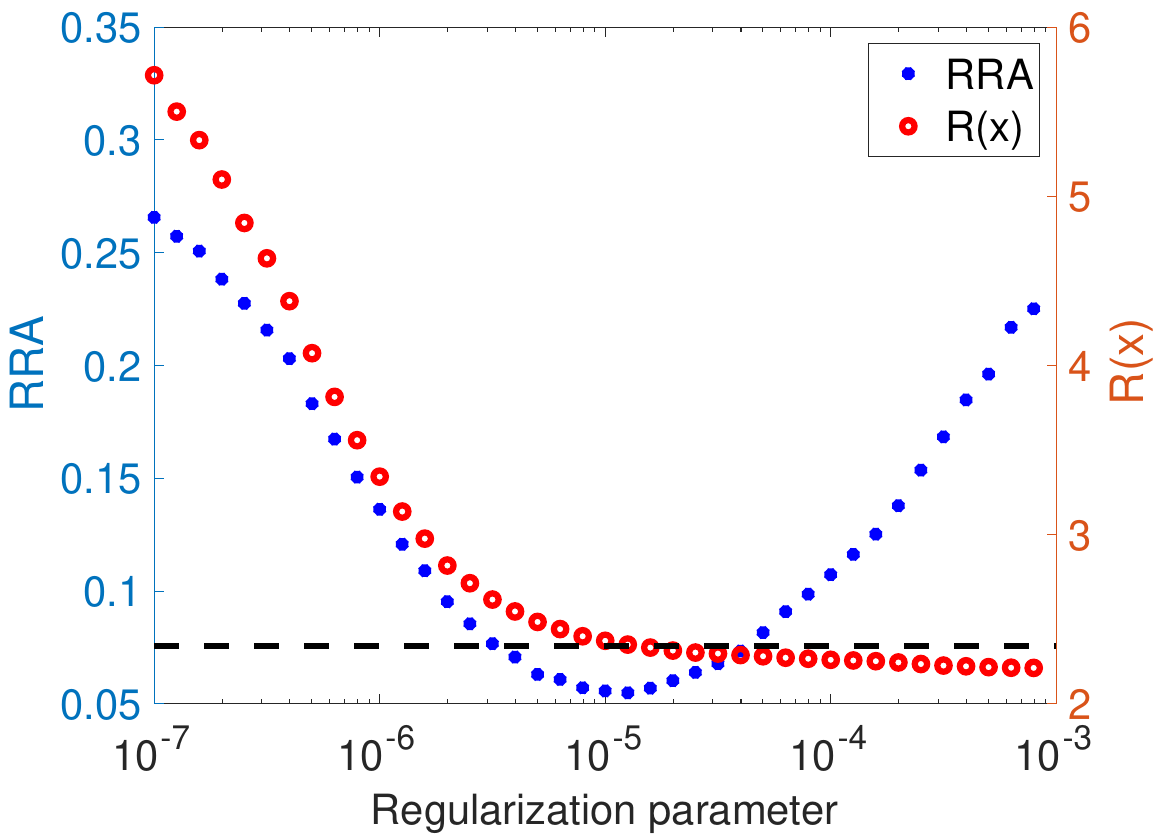}}
    \caption{\small{\label{fig:4.2} Comparison of \ac{RRA} (blue dots) and regularization term (red circles) of reconstruction results of the phantom with size $32\times 32\times 32$ with different measurements and noise level. The black dotted line indicates the reference to the regularization term 2.34. The rows represent measurements with the number of azimuth angles and elevation angles (64, 8), (128, 12), and (256, 16), and the columns represent noise levels 0.01, and 0.05 respectively.}}
\end{figure}

\subsection{\ac{NFR} method application}
\label{sec:5.2}

To demonstrate \ac{NFR} method on large-scale images, we choose a validation lung image of size $128\times 128\times 128$ segmented from the first 10 patients, as shown in Figure \ref{fig:true}. To avoid the inverse crime, the measurements are obtained by implementing the forward operator on the scaled image, enlarged by a factor of 2, resulting in dimensions of $256 \times 256 \times 256$. Subsequently, Algorithm \ref{alg:1} is applied to the data with varying configurations of azimuth and polar angles, $(256, 16)$, $(128, 12)$ and $(64, 8)$, and 0.05 Gaussian noise. The reconstruction results and their corresponding error images are presented in Figure \ref{fig:4.3}. It is observed that an increased number of azimuth and polar angles contributes to the retrieval of finer structures with enhanced quality. 

\begin{figure}[!htb]
  \centering
  \includegraphics[trim=0 0 0 0,clip,width=\textwidth]{image/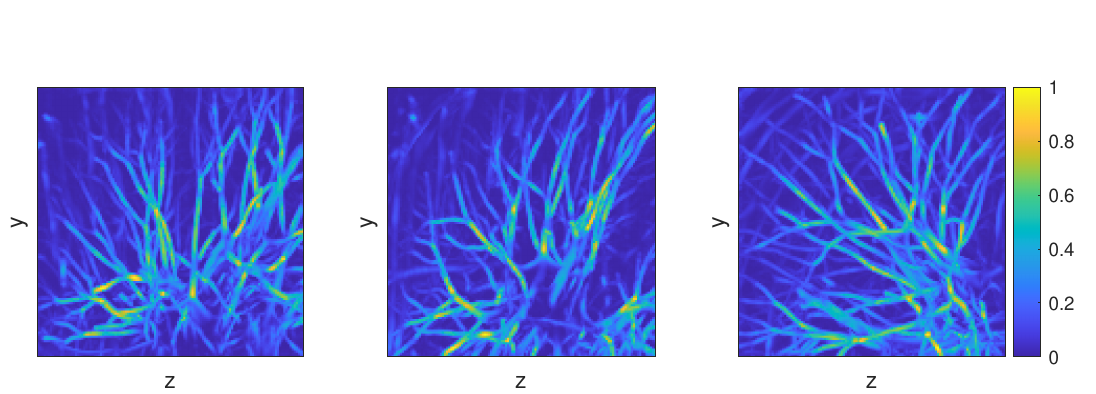}
  \caption{\small{\label{fig:true}Maximum projection intensity (MIP) of validation image along x (left), y (middle), and z-axis (right).}}
\end{figure}

\begin{figure}[!htb]
  \centering
  \includegraphics[trim=0 0 0 0,clip,width=\textwidth]{image_2nd/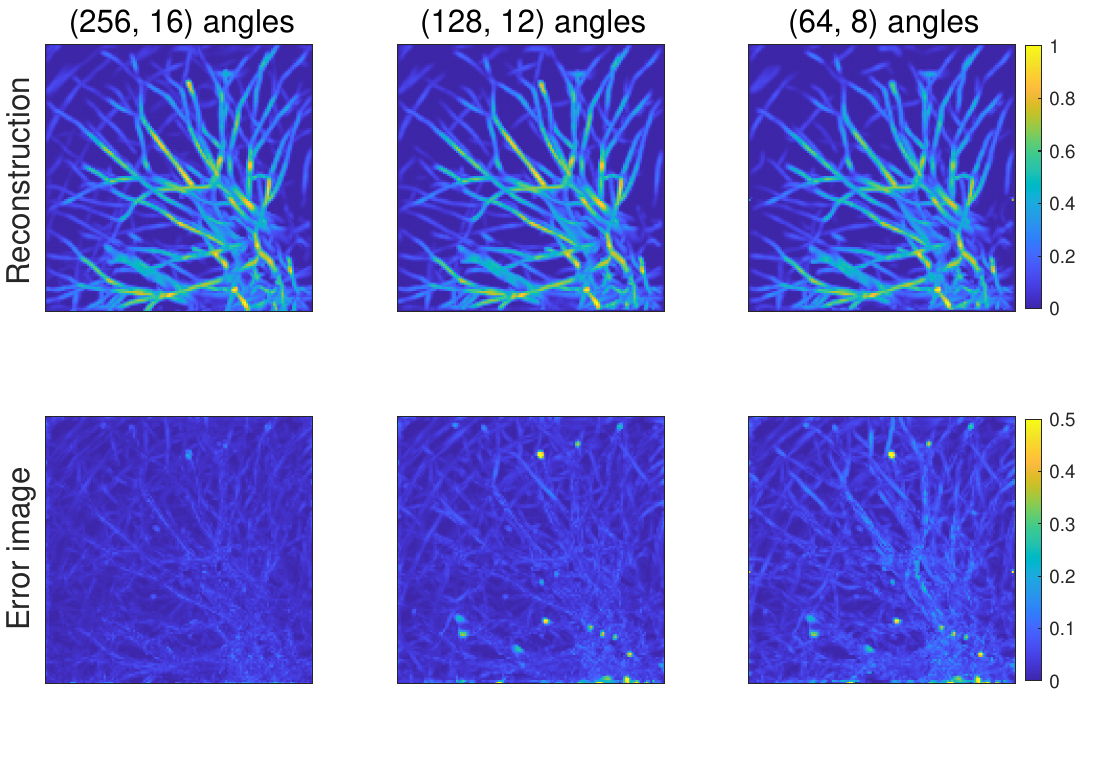}
  \caption{\small{\label{fig:4.3}Maximum projection intensity (MIP) along the z-axis of the reconstruction results $\vx_r$ and error images $\left|\vx_r-\vx^{true}\right|$ by \ac{NFR} method from the data with the different number of azimuth and polar angles $(256, 16)$, $(128, 12)$ and $(64, 8)$ and 0.05 Gaussian noise.}}
\end{figure}

\subsection{Comparison of \ac{NFR} method,  \ac{TV} regularization and post-processing deep learning}
\label{sec:5.3}

The total variation (\ac{TV}), a classical regularization in image reconstruction, is widely used to enhance image quality, particularly in the domain of medical imaging with incomplete and noisy measurements~\cite{wang2008new, beck2009}. \ac{TV} assumes that target images have sharp edges and a smooth background. Therefore, it usually provides a valuable benchmark to assess various reconstruction methods. \Unet as a convolution neural network (\ac{CNN}), achieved significant improvements in deep learning for effectively addressing imaging problems across standard datasets. Serving as a post-processing technique, the \Unet architecture typically involves training a model to effectively map a rough reconstruction or an image derived from measurement to a high-quality reconstruction. Leveraging extensive paired data and a large number of model parameters, \Unet outperforms numerous traditional methods. In this study, we utilize the \Unet architecture with skip connections introduced in \cite{Ronneberger2015} to a three-dimensional version and train a model to map the adjoint of measurement data, $\vec{F}^*\vy_{\vec{n}}$, to the true images $x^*$. It should be emphasized that to avoid the so-called inverse crime, the measurement data of the TV method, NFR method, and $\vy_{\vec{n}}$ in \texttt{U-Net} are obtained by implementing the forward operator on the scaled image of size $256\times 256\times 256$. To investigate the effect of noise and sparse levels in the measurement, the following configurations are made and corresponding regularization parameters are reported:
\begin{enumerate}
    \item Sparsity effect: fixed noise level 0.05, with varying numbers of the azimuth and polar angles $(256, 16)$, $(128, 12)$ and $(64, 8)$. Corresponding regularization parameters for \ac{TV} method are all 3e-5; for \ac{NFR} are 0.0022, 0.0036, and 0.0041.
    \item Noise effect: varying noise 0.05 and 0.01, with a fixed number of the azimuth and polar angles $(64, 8)$. Corresponding regularization parameters for \ac{TV} method are 3e-5 and 2e-5; for \ac{NFR} are 0.0041 and 0.0023.
\end{enumerate}
As a comparison, the sensitivity of \Unet to the measurement setting and data quantity is evaluated through three training configurations. In \Unet$\!\!^1$ and \Unet$\!\!^2$, the networks are trained for each specific noise levels and measurement settings; in \Unet$\!\!^3$, the network is trained across all noise levels and measurement settings:
\begin{enumerate}
\item \Unet$\!\!^1$: Trained with 640 pairs of data.
\item \Unet$\!\!^2$: Trained with 2400 pairs of data.
\item \Unet$\!\!^3$: Trained with 2400 pairs of data.
\end{enumerate}


\paragraph{Sparsity effect}
Table \ref{tab:1} presents the comprehensive quantitative comparison of the reconstruction results from the measurement with fixed 0.05 Gaussian noise and varying sparsity levels. Figure \ref{fig:4.4} and \ref{fig:4.5} show the reconstructed and error images with the number of azimuth and polar angles $(64, 8)$ and $(256, 16)$, respectively. The regularization parameter of \ac{TV} method is tuned to obtain the smallest \ac{RRA}. Table \ref{tab:1} shows that \ac{NFR} achieves the best reconstruction quantitatively. \ac{TV} comes second, and \Unet is the worst. From the reconstructed and error images, it is seen that \ac{NFR} consistently outperforms others. It exhibits enhanced detailed recovery, a smoother background, and the smallest error images. Conversely, the value of the error image from \Unet is the highest, which has a lot of noise in the background. However, compared with the staircase artifacts of blood vessels reconstructed by \ac{TV}, \Unet and \ac{NFR} exhibit a more realistic shape. This observation also illustrates the capacity of the convolutional network to capture and extract intricate image features. 
%
With an increase in the amount of training data, \Unet$\!\!^2$ demonstrates superior performance compared to \Unet$\!\!^1$. The relatively poor performance of \Unet$\!\!^3$ indicates that \Unet, as a supervised neural network methodology, does not generalize effectively. However, \Unet remains a valuable benchmark for comparative analysis. The superior performance of \ac{NFR} further highlights its advantages, particularly in scenarios where data availability is limited.
%

 \begin{table}[!htb]
   \caption{\label{tab:1}Quantitative comparison of the scaled reconstruction results by \ac{NFR} method, \ac{TV} method, and \Unet post-processing from measurement data with 0.05 Gaussian noise and different sparsity levels.}
  \centering
  \begin{tabu}{l|l||ccc}
    \tabucline[2pt]{-}
     \multicolumn{2}{c}{Angles}   & (256, 16)  &  (128, 12)  &  (64, 8)  \\
    \hline\hline
    \multirow{3}{*}{\ac{PSNR}}& \ac{NFR} &\bf{47.54}&\bf{43.63}& \bf{41.38}  \\
                         & \ac{TV}  & 44.48 & 41.58   & 39.36      \\
                         & \Unet$\!\!^1$       &  40.36& 37.73   & 34.78      \\
                         & \Unet$\!\!^2$  &42.52 & 40.42  & 36.73\\
                         & \Unet$\!\!^3$  &38.21 & 35.82  & 32.21\\
    \hline
    \multirow{3}{*}{\ac{SSIM}}& \ac{NFR}  & \bf{0.98}  & \bf{0.97}    & \bf{0.97}         \\
                         & \ac{TV}   & 0.97 & 0.96    &  0.95       \\
                         & \Unet$\!\!^1$       &  0.96 & 0.95    &  0.89       \\
                         & \Unet$\!\!^2$       &0.97   & 0.95    &   0.92      \\
                         & \Unet$\!\!^3$       &0.93   & 0.91    &   0.88      \\
    \hline
     \multirow{3}{*}{\ac{RRA}}& \ac{NFR}  & \bf{0.13}& \bf{0.21} &\bf{0.27}  \\
                         & \ac{TV}   &  0.19 &  0.26  &   0.34    \\
                         & \Unet$\!\!^1$       & 0.30 &  0.41   &  0.57     \\
                         & \Unet$\!\!^2$       &0.24   & 0.30    &  0.46       \\
                         & \Unet$\!\!^3$       &0.33   & 0.45    &  0.59      \\
    \tabucline[2pt]{-}
  \end{tabu}
\end{table}

\begin{figure}[!htb]
  \centering
  \includegraphics[trim=0 0 0 0,clip,width=\textwidth]{image_3rd/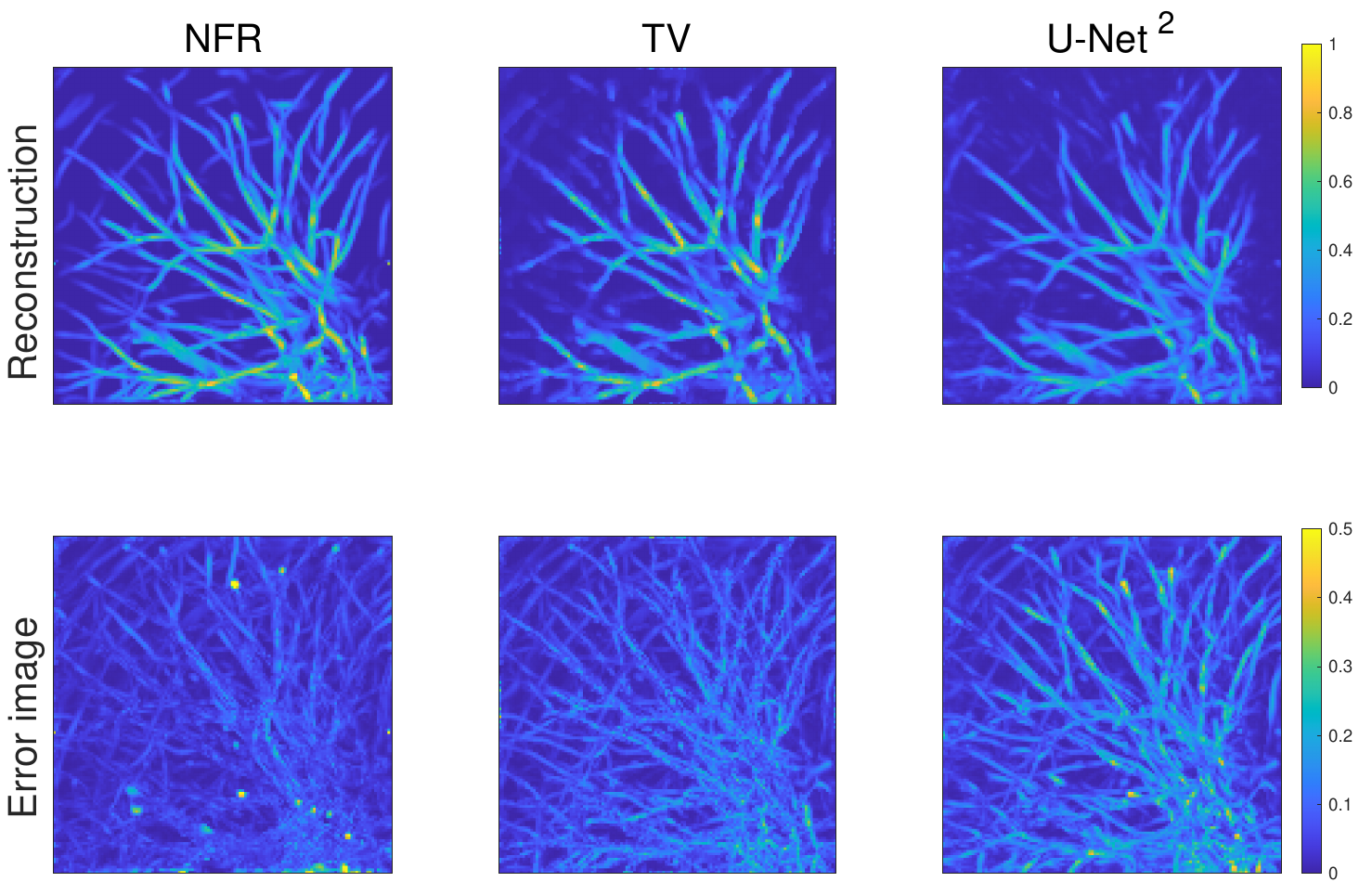}
  \caption{\small{\label{fig:4.4}Maximum projection intensity (MIP) along z-axis of the scaled reconstruction results $\vx_r$ and error images $\left|\vx_r-\vx^{true}\right|$ by \ac{NFR} method, \ac{TV} regularization and \Unet from the data with the number of azimuth and polar angles $(64, 8)$ and 0.05 Gaussian noise.}}
\end{figure}

\begin{figure}[!htb]
  \centering
  \includegraphics[trim=0 0 0 0,clip,width=\textwidth]{image_3rd/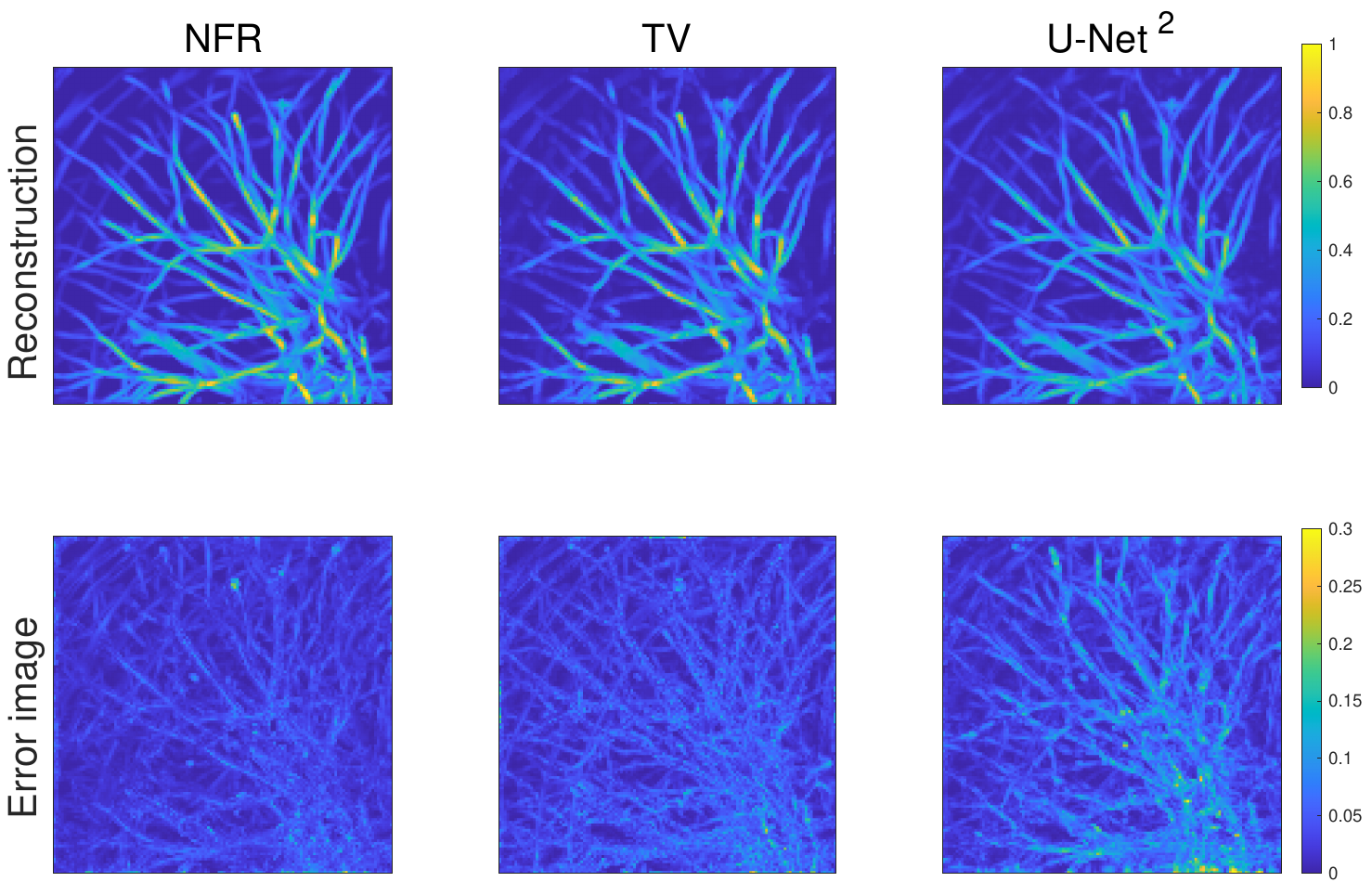}
  \caption{\small{\label{fig:4.5}Maximum projection intensity (MIP) along z-axis of the scaled reconstruction results $\vx_r$ and error images $\left|\vx_r-\vx^{true}\right|$ by \ac{NFR} method, \ac{TV} regularization and \Unet from the data with the number of azimuth and polar angles $(256, 16)$ and 0.05 Gaussian noise.}}
\end{figure}

\paragraph{Noise effect} 
Table \ref{tab:2} shows the quantitative comparison of reconstruction results among three methods from the measurement data with fixed sparsity $(64, 8)$ angles and varying noise levels 0.01 and 0.05. The reconstruction and the corresponding error images are shown in Figure \ref{fig:4.6}. \ac{NFR} method achieves the most high-quality reconstruction, followed by \ac{TV} in the second, while \Unet is the worst. As the noise level increases, the background exhibits stronger noise, leading to a degradation in the clarity of vascular structures with low values, resulting in visible blurring along their edges.

\begin{table}[!htb]
   \caption{\label{tab:2}Quantitative comparison of the reconstruction results by \ac{NFR} method, \ac{TV} method and \Unet post-processing from the data with 0.01 and 0.05 Gaussian noise and $(64, 8)$ azimuth and polar angles.}
  \centering
  \begin{tabu}{l|l||cc}
    \tabucline[2pt]{-}
     \multicolumn{2}{l}{}       & Noise 0.01&   Noise 0.05 \\
     \hline\hline
     \multirow{3}{*}{\ac{PSNR}}& \ac{NFR} &\bf{42.58}& \bf{41.38} \\
                          & \ac{TV}  & 40.72 &  39.36     \\
                          & \Unet$\!\!^1$ &  35.16&  34.78     \\
                          & \Unet$\!\!^2$ & 37.46  & 36.73       \\
                          & \Unet$\!\!^3$ & 33.58  & 32.21       \\
     \hline
      \multirow{3}{*}{\ac{SSIM}}& \ac{NFR} &\bf{0.97}& \bf{0.97} \\
                          & \ac{TV}  & 0.95  & 0.95     \\
                          & \Unet$\!\!^1$ & 0.93  &  0.89    \\
                          & \Unet$\!\!^2$ & 0.94  &  0.92      \\
                          & \Unet$\!\!^3$ & 0.91  &  0.88      \\
     \hline
     \multirow{3}{*}{\ac{RRA}}& \ac{NFR} &\bf{0.23}& \bf{0.27} \\
                          & \ac{TV}  & 0.29  &0.34     \\
                          & \Unet$\!\!^1$ &  0.55    & 0.57    \\     
                          & \Unet$\!\!^2$ & 0.42  &  0.46      \\
                          & \Unet$\!\!^3$ & 0.56  &  0.59      \\
    \tabucline[2pt]{-}
  \end{tabu}
\end{table}

\begin{figure}[!htb]
  \centering
  \includegraphics[trim=0 0 0 0,clip,width=\textwidth]{image_3rd/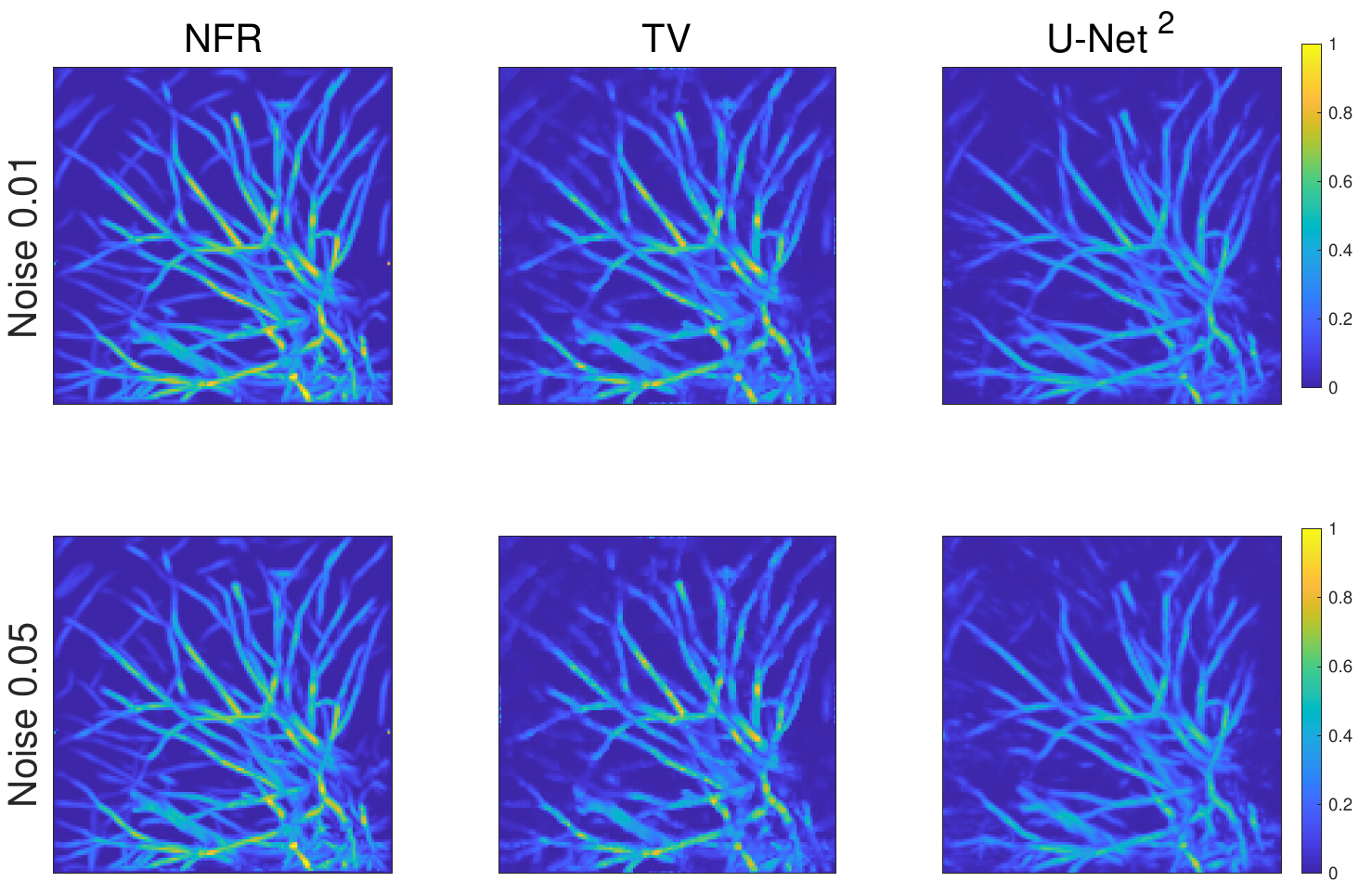}
  \caption{\small{\label{fig:4.6}Comparison of scaled reconstruction results among \ac{NFR} method, \ac{TV} regularization and \Unet from the data with the number of azimuth and polar angles $(64, 8)$, 0.01 and 0.05 Gaussian noise.}}
\end{figure}

\section{Conclusion}
\label{sec:6}

The proposed Normalizing Flow-Based Regularization (\ac{NFR}) achieves significant performance improvements in reconstructing three-dimensional photoacoustic tomography from limited-view measurements. The NFR method outperforms the classic Total Variation \ac{TV} method and \Unet post-processing. By utilizing data augmentation, the \Glow model accurately models the statistical properties of the training data, even with a limited amount of data. We propose an adaptive strategy to automatically tune the regularization parameter and prove that it is admissible. For deploying our method for images that are larger than the ones in the training data, a setting common in practical applications, considering the localized nature of vascular structures, we apply a patched-based regularization strategy. This involves randomly segmenting sub-patches of the same size as the training data, thereby avoiding the high computational costs of model training with high-resolution inputs and enhancing the model's flexibility as a regularizer. Compared to supervised network models that heavily depend on specific measurement settings, the NFR can be directly applied to any measurement scenario once trained.

To evaluate the robustness of \ac{NFR} against varying noise levels and sparsity in measurements, we compare it with \ac{TV} and \Unet in reconstructing larger images than those in the training data, under different noise and sparsity conditions. The results demonstrate that both \ac{NFR} and \ac{TV} consistently outperform \Unet across all scenarios. For relatively dense measurements with 256 azimuth angles and 16 polar angles, the \ac{TV} and \ac{NFR} methods achieve comparable accuracy. However, in the case of sparse measurements with 64 azimuth angles and 8 polar angles, the \ac{NFR} method significantly outshines its counterpart. Notably, even when the measurement configuration is reduced to $(64, 8)$ angles, \ac{NFR} maintains a similar reconstruction accuracy to the \ac{TV} method at $(128, 11)$ angles, thereby offering a computational saving of approximately one-third.

The effectiveness of the \ac{NFR} method primarily stems from its precise estimation of the image prior using the normalizing flow model \Glow. As a result, it is particularly suitable for image sets originating from the same distribution, such as medical images from identical biological tissues or imaging modalities. However, this also represents a limitation of the \ac{NFR} method. For more diverse image sets, a more complex normalizing model is necessary to accurately describe their distribution and adapt to the measurements. Despite this, the \ac{NFR} method holds significant potential to enhance the quality of image reconstruction while reducing the need for extensive measurements and computational resources.

\section*{Acknowledgments}
We would like to thank the anonymous referees for their useful comments that helped us improve the quality of the paper. This work was supported by the Ministry of Education under Grant MOE-000537-01. 

\section*{References}


\end{document}